\newtheorem{thm}{Theorem}[section]
\newtheorem{cor}[thm]{Corollary}
\newtheorem{lema}[thm]{Lemma}
\newtheorem{prop}[thm]{Proposition}
\theoremstyle{definition}
\theoremstyle{remark}
\newtheorem{rem}[thm]{Remark}
\numberwithin{equation}{section}
\newcommand{\R}{\mathbb R}
\newcommand{\N}{\mathbb N}
\newcommand{\A}{\mathcal A}
\newcommand{\lam}{\lambda}
\newcommand{\cd}{\rightharpoonup}
\def\diver{\mathop{\text{\normalfont div}}}
\def\dist{\mathop{\text{\normalfont dist}}}
\begin{document}

\title{Lower bounds for Orlicz eigenvalues}
\author[A. M. Salort]
{Ariel M.  Salort}%

\address{Instituto de C\'alculo, FCEyN - Universidad de Buenos Aires and
\hfill\break \indent IMAS - CONICET
\hfill\break \indent Ciudad Universitaria, Pabell\'on I (1428) Av. Cantilo s/n. \hfill\break \indent Buenos Aires, Argentina.}

\email[A.M. Salort]{asalort@dm.uba.ar}
\urladdr{http://mate.dm.uba.ar/~asalort}

\keywords{Lyapunov inequality, eigenvalue bounds, Orlicz spaces
\\ \indent 2010 {\it Mathematics Subject Classification.} 35J62, 35P15, 46E30     
}

\begin{abstract}
In this article we consider the following  weighted nonlinear  eigenvalue problem for the $g-$Laplacian 
$$
-\diver\left( g(|\nabla u|)\frac{\nabla u}{|\nabla u|}\right) = \lam w(x) h(|u|)\frac{u}{|u|} \quad \text{ in }\Omega\subset \R^n, n\geq 1
$$
with Dirichlet boundary conditions. Here $w$ is a suitable weight and $g=G'$ and $h=H'$ are appropriated Young functions satisfying the so called $\Delta'$ condition, which includes  for instance logarithmic perturbation of powers and different power behaviors near zero and infinity. We prove several properties on its spectrum, being  our  main goal to obtain lower bounds of  eigenvalues in terms of $G$, $H$, $w$ and the normalization  $\mu$ of the corresponding eigenfunctions.

We introduce some new strategies to obtain results that generalize several inequalities from the literature of $p-$Laplacian type eigenvalues.

\end{abstract}

\maketitle

\section{Introduction}
The first eigenvalues of the one-dimensional $p-$Laplacian, $p>1$, with Dirichlet boundary condition,   i.e., $-(|u'|^{p-2}u')'=\lam |u|^{p-2}u$ in $\Omega=(a,b)$ with $u(a)=u(b)=0$,  can be computed explicitly as (see \cite{O})
$$
\lam_1 = \left(\frac{\pi_p}{b-a}\right)^p  \qquad \text{where }\pi_p :=2(p-1)^\frac1p \int_0^1 \frac{dt}{(1-t)^\frac1p}.
$$
However, in  the weighted version of the previous equation, i.e.,  $\lam w(x) |u|^{p-2}u$ in the right hand side for a suitable   function $w$,  an explicit  formula for $\lam_1(w)$ cannot be obtained. For this reason, to obtain lower bounds of this quantity has been a  subject of interest in the last years. As a consequence of a so-called Lyapunov type inequality (see \cite{Lya}), in \cite{E, LYHA,P,Pi, SL} are obtained bounds of the kin
\begin{align*}
\frac{\pi^p}{\|w\|_\infty (b-a)^p} &\leq \lam_1(w)\quad  \text{ when } 0\leq w\leq M \text{ in } (a,b),\\
\left(\frac{\pi_p}{\int_a^b w^\frac{1}{p}(x)\,dx}\right)^p &\leq \lam_1(w) \quad \text{ when }w\in L^1(a,b).
\end{align*}
 
When $\Omega\subset \R^n$, $n\geq 2$, explicit formulas for eigenvalues of the $p-$Laplacian are not available in general.  In \cite{AL} it is proved that the first eigenvalue of $-\Delta_p u:=-\diver(|\nabla u|^{p-2}\nabla u) =\lam w(x)|u|^{p-2}u$ in $\Omega$ with $u=0$ on $\partial\Omega$   satisfies that
\begin{equation*}
\frac{C}{\|w\|_{L^\infty(\Omega)} |\Omega|^\gamma} \leq \lam_1(w) \quad \text{ where } \gamma=\begin{cases}  \frac{p}{n} &\text{ when } 1<p\leq n,\\ \frac12 &\text{ when }n<p, \end{cases}
\end{equation*}
when $w\in L^\infty(\Omega)$. Similar bounds were obtained in \cite{C}:
\begin{equation*}
\frac{C}{\|w\|_{L^s(\Omega)} |\Omega|^\frac{sp-n}{sn}} \leq \lam_1(\Omega) \quad \text{ where } s=\begin{cases} s_0>\frac{n}{p} &\text{ when } 1<p\leq n\\ 1  &\text{ when }n<p.\end{cases}
\end{equation*}
In addition, in \cite{DNP}, bounds involving the inner radius $r_\Omega$ of $\Omega$ are obtained as a consequence of a Lyapunov type inequality:
\begin{align*}
\frac{C}{r_\Omega^{p-n} \|w^+\|_{L^1(\Omega)}} &\leq \lam_1(w) \quad\text{when}\quad p>n,\\ \frac{C}{r_\Omega^{\frac{sp-n}{s}} \|w^+\|_{L^s(\Omega)} }&\leq \lam_1(w) \quad \text{when}\quad p<n \text{ for any $s>0$ such that $sp>n$},
\end{align*}
where $C$ is a positive constant independent of $\lam_1(w)$, $w^+$ is the positive part of $w$ and $r_\Omega:=\max\{\dist(x,\partial\Omega)\colon x\in\Omega\}$. Later, in \cite{JKS} these results were extended to the nonlocal case.

In this manuscript we deal with the case of operators with a behavior more general than a power, whose prototype is  the $g-$Laplacian defined as $\Delta_g u:=\diver(g(|\nabla u|)\frac{\nabla u}{|\nabla u|})$, being $g$ the derivative of a Young function $G$ (see Section \ref{sec.young}). Here, the lack of homogeneity adds an extra difficulty and spoils up some of the techniques used in the powers case. See \cite{autov1,autov2}.  Now, formulas for eigenvalues are not available even in dimension one  without weight functions on the right hand side, and up to our knowledge, only a few papers have dealt with lower bounds of eigenvalues, and only for the one-dimension case. More precisely, in \cite{DNP-1d} some estimates  are obtained in terms of the upper and lower bounds of $G$, and in \cite{SV} some estimates for sub-multiplicative Young functions were proved.

Given Young functions $G$ and $H$ satisfying the so-called $\Delta_2$ condition (i.e., there is some positive constant $\bar C$ such that $G(2t)\leq \bar CG(t)$ for all $t\geq 0$, and the same for $H$), we consider the following  nonlinear  eigenvalue  problem in a bounded Lipschitz domain $\Omega\subset \R^n$, $n\geq 1$:
\begin{align} \label{eigen.intro}
\begin{cases}
-\diver\left( g(|\nabla u|)\frac{\nabla u}{|\nabla u|}\right) = \lam w(x) h(|u|)\frac{u}{|u|} &\text{ in } \Omega\\
u=0 &\text{ on } \partial\Omega,
\end{cases}
\end{align} 
where $\lam\in \R$ is the eigenvalue parameter, $w$ is a suitable positive weight function (typically, $w\in L^\infty(\Omega)$ or $w\in L^A(\Omega)$ for some Young function $A$), $g=G'$ and $h=H'$.

We first prove that under some suitable conditions on the growth behavior of $G$, $H$ and $w$, for each $\mu>0$ the quantity 
$$
\lam_{1,\mu}=\lam_{1,\mu}(\Omega,w):=\min\left\{ \frac{\int_\Omega G(|\nabla u|)\,dx}{\int_\Omega wH(|u|)\,dx} \colon u\in W^{1,G}_0(\Omega) \text{ and } \int_\Omega wH(|u|)\,dx =\mu\right\}
$$
is   the \emph{first eigenvalue} of \eqref{eigen.intro} with energy level $\mu$ (see Section \ref{sec.autov} for the precise definition of eigenvalue) in the sense that there is no $\lam\in (0,\lam_{1,\mu})$ with eigenfunction $u$ normalized such that $\int_\Omega wH(u)\,dx=\mu$ (see Proposition \ref{prop.lam1}) . Here $W^{1,G}_0(\Omega)$ is the natural Orlicz-Sobolev space where to study the problem. Moreover, with a further assumption on $G$ which guarantees the monotonicity of $\Delta_g$, in Proposition \ref{prop.closed} it is proved that the set of all eigenvalues of \eqref{eigen.intro} with normalized eigenfunctions is closed. Moreover, given $0<\mu_1<\mu_2$, in Proposition \ref{eig.mono} we prove that the  eigenvalues  are ordered as
$\mu_1 \lam_{1,\mu_1} \leq \mu_2 \lam_{1,\mu_2}$.

The main goal of this manuscript is to obtain quantitative lower bounds of $\lam_{1,\mu}(\Omega,w)$ in terms of $\Omega$  (involving quantities such as the volume of $\Omega$ or its inradius, i.e., the radius of the biggest ball contained in $\Omega$), $w$ and $\mu$. For this purpose we will consider the subclass of Young functions satisfying the so-called $\Delta'$ condition, i.e., there is a positive constant $C$ such that $G(ab)\leq CG(a)G(b)$ for every $a,b\geq 0$ (which implies the $\Delta_2$ condition). This class has as main examples $G(t)=t^p$ with $t\geq 0$ and $p>1$, $G(t)=t^p(1+|\log t|)$ with $t\geq 0$ and $p>1$ and $G(t)=\max\{t^p,t^q\}$ with $t\geq 0$ and $p,q>1$, among others (see Section \ref{sec.young} for more examples).

In our analysis will be fundamental the growth behavior of the Young functions, being $G^*$, the critical function in the Orlicz-Sobolev embedding (which plays the role of  $p^*=\frac{np}{n-p}$ in the case of powers),  the threshold function in our estimates. For this reason it is convenient to define the quantity $T_g$ and the \emph{critical Young function} $G^*$ as
$$
T_g:=\int_1^\infty \frac{G^{-1}(s)}{s^{1+\frac{1}{n}}}\,ds, \qquad (G^*)^{-1}(t)=\int_0^t  \frac{G^{-1}(s)}{s^{1+\frac{1}{n}}} \,ds.
$$
See Section \ref{sec.emb} for details.

We describe now our main results. In all the theorems $w$ denotes a non-negative function.

In Theorems \ref{teo.princ.1} we obtain lower bounds for $\lam_{1,\mu}(\Omega,w)$ when $\mu\geq 1$ in the case in which $T_g=\infty$ and $H$ and $G$ are ordered as $H\prec G \prec\prec G^*$:
\begin{align*}
\frac{1}{\mu}\left[ G\left( \frac{c_1}{ G^{-1}\left(\frac{1}{\tau_A(\Omega)\|w\|_{L^\infty(\Omega)} }\right)   }  \right) \right]^{-1}&\leq \lam_{1,\mu} \quad \text{ when } w\in L^\infty(\Omega),\\
\frac{1}{\mu}\left[ G\left( \frac{\hat c_1}{ G^{-1}\left(\|w\|^{-1}_{\tilde B}\right)   }  \right) \right]^{-1} &\leq \lam_{1,\mu}  \quad \text{ when } w\in L^{\tilde B}(\Omega) \text{ with } B\prec A,
\end{align*}
being $c_1$ and $\hat c_1$ computable positive constants independent of $\mu$; $A$ is the young function defined as $A=G^*\circ G^{-1}$ and  $\tau_A(\Omega)=|\Omega| A^{-1} (|\Omega|^{-1})$, where $\tilde A$ is conjugated Young  function of $A$.  The notation $H\prec G$ means that there exists a positive constant $k$ such that $H(t)\leq G(kt)$ for $t\geq t_0\geq 0$, and $G \prec\prec G^*$ means that  $G(t)/G^*(kt)\to 0$ as $t\to\infty$ for all $k>0$. Along this paper $\|\cdot\|_G$ denotes the so-called Luxemburg norm (see Section \ref{sec.young} for its definition).

\begin{rem}
In the case of the eigenvalue problem for the Dirichlet $p-$Laplacian, i.e.,  $G(t)=H(t)=t^p$, $p>1$,  Theorem  \ref{teo.princ.1} reads as
\begin{align*}
\frac{\bar c_1}{ \|w\|_{L^\infty(\Omega)}|\Omega|^\frac{p}{n}} &\leq \lam_{1} \qquad \text{ for } p<n \text{ and } w\in L^\infty(\Omega),\\
\frac{\bar c_1}{ \|w\|_{L^{s}(\Omega)}} &\leq \lam_{1} \qquad \text{ for } p<n \text{ and } w\in L^s(\Omega) \text{ with } sp>n.
\end{align*}
These results recover some of the lower bounds obtained in \cite{AL,C, DNP} in the case $n>p$.
\end{rem}

Our second result complements the previous inequality: in Theorem \ref{p.hardy} we prove that for any $\mu\geq 1$ there exists a positive constant $c_2$ independent of $\mu$ such that
$$
\frac{1}{\mu}\left[ G\left( \frac{ c_2 r_\Omega}{ H^{-1}(\|w\|_{L^\infty(\Omega)}^{-1})}\right) \right]^{-1} \leq \lam_{1,\mu} \quad \text{ when } w\in L^\infty(\Omega) \text{ and } \hat H \prec G,
$$
where $\hat H(t):= t\int_{t_0}^t \frac{H(s)}{s^2}\,ds$, $t\geq 0$ and $r_\Omega$ denotes the inner radius of $\Omega$.

\begin{rem}
In the case of the eigenvalue problem for the Dirichlet $p-$Laplacian, i.e.,  $G(t)=H(t)=t^p$, $p>1$,  Theorem  \ref{p.hardy} reads  as
$$
\frac{\bar c_2}{r_\Omega^p \|w\|_{L^\infty(\Omega)} } \leq \lam_1 \quad \text{ when } w\in L^\infty(\Omega) \text{ and }p> 1.
$$
In particular, since $B_{r_\Omega} \subset \Omega$ we have that $\omega_n r_\Omega^n \leq |\Omega|$ which gives
$$
\frac{\bar c_2}{|\Omega|^\frac{p}{n} \|w\|_{L^\infty(\Omega)} } \leq \lam_1 \quad \text{ when } w\in L^\infty(\Omega) \text{ and }p> 1.
$$
These results recover some of the lower bounds obtained in \cite{AL,C, DNP}.
\end{rem}
 
In Theorem \ref{p.bigger.n} we obtain lower bounds for $\lam_{1,\mu}(\Omega,w)$ when $\mu\geq 1$ in the case in which  $T_g<\infty$:
\begin{align*}
\frac{1}{\mu}\left[G\left(  \frac{ c_3\sigma(r_\Omega)}{H^{-1}(\|w\|_{L^1(\Omega)}^{-1})}\right)\right]^{-1} &\leq \lam_{1,\mu} \quad \text{ when } w\in L^\infty(\Omega)\\
\frac{1}{\mu}\left[G\left(\frac{\hat c_3\sigma(r_\Omega)\cdot \tau_H(\Omega)}{H^{-1}(\|w\|_{L^\infty(\Omega)}^{-1})}\right) \right]^{-1} &\leq \lam_{1,\mu}  \quad \text{ when } w\in L^\infty(\Omega),
\end{align*}
where $c_3$ and $\hat c_3$ are a computable positive constant independent of $\mu$, $\tau_H(\Omega):=|\Omega| (\tilde H)^{-1}(|\Omega|^{-1})$ and  $r_\Omega$ is the inner radius of $\Omega$. Here we have denoted
$$
\sigma(t)=\int_{t^{-n}}^\infty  \frac{G^{-1}(s)}{s^{1+\frac1n}}\,ds.
$$

\begin{rem}
In the case of the eigenvalue problem for the Dirichlet $p-$Laplacian, i.e.,  $G(t)=H(t)=t^p$, $p>1$,     Theorem \ref{p.bigger.n} reads as
\begin{align*}
\frac{\bar c_3}{r_\Omega^{p-n} \|w\|_{L^1(\Omega)}  } &\leq \lam_1\quad  \text{ when } w\in L^\infty(\Omega) \text{ and }  p>n,\\
\frac{\bar  c_3}{r_\Omega^{p-n} |\Omega|\|w\|_{L^\infty(\Omega)}  } &\leq \lam_1\quad  \text{ when } w\in L^\infty(\Omega) \text{ and }   p>n,
\end{align*}
where $\bar c_3$ is a positive constant independent of $|\Omega|$ and $r_\Omega$. These results recover some of results in \cite{DNP,Pi}.
\end{rem}

In the one-dimensional case, i.e., \eqref{eigen.intro} in a bounded interval $\Omega=(a,b)\subset \R$, more information can be retrieved. In Theorem \ref{teo1d-v1} it is proved that for any $\mu\geq 1$, 
$$
\frac{1}{\mu}\left[ G\left( 
c_4 (b-a) \frac{G^{-1}\left(\frac{1}{b-a}\right)}{H^{-1}\left(\|w\|_{L^1(\Omega)}^{-1}\right)}
 \right) \right]^{-1} \leq \lam_{1,\mu}  \quad \text{ when } w\in L^\infty(a,b)
$$
for any pair of Young functions $G$ and $H$ satisfying the $\Delta'$ condition, where $c_4$ is an explicit constant.

If we further assume that $H\prec G$ (i.e., $H(t)\leq G(kt)$ for $t\geq 0$ and for some $k\geq 0$), then in Theorem \ref{teo1d-v2} we prove that for any $\mu >0$
$$
\frac{b-a}{C k \|w\|_{L^1(\Omega)} G\left(\frac{k(b-a)}{2}\right)} \leq \lam_{1,\mu} \quad \text{ when } w\in L^\infty(a,b)
$$
where $C$ is the constant in the $\Delta'$ condition. In particular this gives that
$$
\left[ C k \|w\|_{L^\infty(\Omega)} G\left(\frac{k(b-a)}{2}\right) \right]^{-1} \leq \lam_{1,\mu} \quad \text{ when } w\in L^\infty(a,b).
$$
Observe that here there is no restriction on $\mu$.

\begin{rem}
In the case of the eigenvalue problem for the Dirichlet $p-$Laplacian, i.e.,  $G(t)=H(t)=t^p$, $p>1$, with $w\in L^\infty(\Omega)$ a positive weight function,   Theorem \ref{teo1d-v1} gives the following rough bounds
$$
\frac{1}{2^p \|w\|_{L^1(\Omega)} (b-a)^{p-1}} \leq \lam_1, \qquad
\frac{1}{2^p \|w\|_{L^\infty(\Omega)} (b-a)^{p}} \leq \lam_1,
$$ 
and Theorem \ref{teo1d-v2}  provides for the more accurate inequalities
$$
\frac{2^p}{\|w\|_{L^1(\Omega)} (b-a)^{p-1} }\leq \lam_1, \qquad \frac{2^p}{\|w\|_{L^\infty(\Omega)} (b-a)^p} \leq \lam_1,
$$
which recover   the known lower bound for the one-dimensional $p-$Laplacian obtained in \cite{LYHA,P,Pi, SL}.
\end{rem}

\subsection*{Organization of the paper}
The paper is organized as follows. In section  \ref{sec.prel} we introduce the notion of Young function  and some results regarding Orlicz-Sobolev spaces. Section \ref{sec.autov} is devoted to study some properties of the eigenvalue problem for the $g-$Laplacian with Dirichlet boundary conditions. In section \ref{sec.teo} we prove our main theorems in $\R^n$ while in section \ref{sec.1d}, we deliver for the proofs in the one-dimensional case. Finally in section \ref{sec.rem} we mention some remarks and final comments.

\section{Preliminaries} \label{sec.prel}

Along this article $\Omega$ will denote an open and bounded set in $\R^n$, $n\geq 1$ with Lipschitz boundary. The \emph{inner radius} of $\Omega$ is defined as
$r_\Omega:=\max\{\dist(x,\partial\Omega)\colon x\in\Omega\}$, where
$$
\dist(x,\partial\Omega):=\inf\{|x-y|\colon y\in\partial\Omega\}
$$
is the distance from $x\in\Omega$ to the boundary $\partial\Omega$.

\subsection{Young functions} \label{sec.young}

An application $G\colon[0,\infty)\longrightarrow [0,\infty)$ is said to be a  \emph{Young function} if it admits the integral representation 
\[
G(t)=\int_0^t g(s)\,ds,
\] 
where the right-continuous function $g$ defined on $[0,\infty)$ has the following properties:
\begin{itemize}
\item[(i)] $g(0)=0$, \quad $g(t)>0 \text{ for } t>0$, 
\item[(ii)]  $g \text{ is nondecreasing on } (0,\infty)$, 
\item[(iii)]  $\lim_{t\to\infty}g(t)=\infty$.
\end{itemize}

From these properties it is easy to see that a Young function $G$ is continuous, nonnegative, strictly increasing and convex on $[0,\infty)$. 

Without loss generality we can assume $G(1)=1$.
 
We will consider the class of Young functions such that $g=G'$ is an absolutely continuous function that satisfies the condition 
\begin{equation} \label{condp}
1\leq p^-\leq \frac{tg(t)}{G(t)}\leq p^+<\infty, \qquad t\geq 0.
\end{equation}
Condition \eqref{condp} is equivalent to ask $G$ and $\tilde G$ to satisfy the \emph{$\Delta_2$ condition or doubling condition}, i.e., 
$$
G(2t)\leq 2^{p^+} G(t), \qquad \tilde G(2t) \leq t^{p^-} \tilde G(t),
$$
where the \emph{complementary} function of a Young function $G$ is the Young function defined as
$$
\tilde G(t)=\sup\{ta-G(a)\colon a>0\}.
$$

We say that the Young function $G$ satisfies the \emph{$\Delta'-$condition} if there exists a positive constant $C\geq 1$ such that
\begin{equation} \label{cond}
G(ab)\leq C G(a)G(b)
\end{equation}
for all $a,b\geq 0$.

In particular, if a Young function $G$ satisfies $\Delta'$ condition, then it satisfies the $\Delta_2$ condition.

Observe that taking $a=G^{-1}(s)$, $b=G^{-1}(t)$, for some $s,t\geq 0$, \eqref{cond} gives
$$
G(G^{-1}(s) G^{-1}(t)) \leq C G(G^{-1}(s))G(G^{-1}(t))=C st
$$
from where
\begin{equation} \label{cond1}
G^{-1}(s) G^{-1}(t) \leq  c G^{-1}(st)
\end{equation}
for all $s,t\geq 0$, where $c=c(C,p^+,p^-)$.

From now on, $C$ and $c$ will denote the constants in \eqref{cond} and \eqref{cond1}.

Examples of Young functions satisfying the $\Delta'$ condition include:
\begin{itemize}
\item $G(t)=t^p$, $t\geq0$, $p>1$;
\item $G(t)=t^p (1+|\log t |)$, $t\geq0$, $p>1$;
\item $G(t)= t^p \chi_{(0,1]}(t) + t^q \chi_{(1,\infty)}(t)$, $t\geq0$, $p,q>1$;
\item $G(t)$ given by the complementary function to $\tilde G(t)=(1+t)^{\sqrt{\log(1+t)}}-1$, $t\geq0$.
\end{itemize}

We conclude this brief introduction by mentioning some useful properties that Young functions fulfill.

\begin{lema} \label{monot}
Assume that $G$ is a Young function satisfying
\begin{equation} \label{condp'}
1\leq p^- -1 \leq \frac{tg'(t)}{g(t)}\leq p^+-1<\infty, \qquad t\geq 0
\end{equation}
for some finite constants $p^\pm>2$. Then for all $a,b\in \R^n$ it holds that
\begin{equation*}
\left(\frac{g(|a|)}{|a|}a-\frac{g(|b|)}{|b|}b \right)\cdot (a-b) \geq 0.
\end{equation*}
\end{lema}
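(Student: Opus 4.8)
The plan is to establish the pointwise inequality by a short computation relying only on the monotonicity of $g$ and the Cauchy--Schwarz inequality; in fact the structural hypothesis \eqref{condp'} will not be needed for the nonnegativity asserted here (it becomes relevant only if one wants a quantitative strict monotonicity estimate). Write $V(x):=\frac{g(|x|)}{|x|}x$ for $x\neq 0$ and $V(0):=0$, in accordance with the convention that $g(|\nabla u|)\nabla u/|\nabla u|$ vanishes where $\nabla u=0$; the goal is to show $\big(V(a)-V(b)\big)\cdot(a-b)\ge 0$ for all $a,b\in\R^n$.

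First I would dispose of the degenerate cases: if $a=0$ then $\big(V(a)-V(b)\big)\cdot(a-b)=V(b)\cdot b=g(|b|)|b|\ge 0$, and symmetrically if $b=0$. So assume $a,b\neq 0$. By Cauchy--Schwarz, $a\cdot(a-b)=|a|^2-a\cdot b\ge |a|^2-|a||b|$, hence
$$
\frac{g(|a|)}{|a|}\,a\cdot(a-b)\ \ge\ g(|a|)\big(|a|-|b|\big),
$$
and likewise $b\cdot(a-b)=a\cdot b-|b|^2\le |a||b|-|b|^2$, which together with $g(|b|)\ge 0$ gives
$$
\frac{g(|b|)}{|b|}\,b\cdot(a-b)\ \le\ g(|b|)\big(|a|-|b|\big).
$$
Subtracting the second inequality from the first yields
$$
\big(V(a)-V(b)\big)\cdot(a-b)\ \ge\ \big(g(|a|)-g(|b|)\big)\big(|a|-|b|\big).
$$

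It then only remains to observe that, $g$ being nondecreasing on $[0,\infty)$, the factors $g(|a|)-g(|b|)$ and $|a|-|b|$ are both $\ge 0$ or both $\le 0$, so their product is nonnegative; this finishes the proof. I do not foresee any genuine obstacle here: the whole content is the elementary remark that, via Cauchy--Schwarz, the vector inequality collapses to the scalar monotonicity of $t\mapsto g(t)$, the only mild care being the bookkeeping at $a=0$ or $b=0$. If a more conceptual argument is preferred, one can instead note that $V=\nabla_x\big(G(|x|)\big)$ and that $x\mapsto G(|x|)$ is convex and of class $C^1$ on $\R^n$ (since $G$ is convex and increasing, $|\cdot|$ is convex, and $g(0)=0$ so the gradient extends continuously by $0$ at the origin), whence $V$ is a monotone operator and the inequality is immediate; the computation above is just the hands-on version of this fact.
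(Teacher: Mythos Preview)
Your argument is correct. The paper states this lemma without proof, so there is no ``paper's own proof'' to compare against; your short computation via Cauchy--Schwarz, reducing the vector inequality to the scalar monotonicity $(g(|a|)-g(|b|))(|a|-|b|)\ge 0$, is a standard and complete justification. Your observation that hypothesis~\eqref{condp'} is not actually needed for the bare nonnegativity (only the nondecreasing property of $g$, already built into the definition of Young function, is used) is also accurate; the extra structure in~\eqref{condp'} would only be required for a quantitative strict monotonicity bound, which is not claimed here.
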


\begin{rem}
Observe that condition \eqref{condp'} is the equivalent to assume $p\geq 2$ in the case $G(t)=t^p$, and it implies in particular condition \eqref{condp}.
\end{rem}

Given a Young function $G$, $\Omega\subset \R^n$ and a non-negative measurable and locally integrable function $w$, we define the \emph{modular} $\Phi_{G,w}$ as
$$
\Phi_{G,w}(u) =\int_\Omega G(|u|) w(x)\,dx.
$$
The so-called \emph{Luxemburg} norm is defined as
$$
\|u\|_{G,w} := \inf\left\{ \delta>0\colon \Phi_{G,w}\left(\frac{u}{\delta}\right) \leq 1 \right\}.
$$

When $w\equiv 1$ we just write $\Phi_G(\cdot)$ and $\|\cdot\|_G$.

\begin{lema} \cite[Lemma 2.1.14]{DHHR} \label{equiv}
Let $G$ be a Young function. Then 
\begin{itemize}
\item[(i)] $\Phi_{G,w}(u)\leq 1 \iff \|u\|_{G,w} \leq 1$,
\item[(ii)] $\Phi_{G,w}(u)< 1 \iff \|u\|_{G,w} < 1$,
\item[(iii)] $\Phi_{G,w}(u)= 1 \iff \|u\|_{G,w} = 1$.
\end{itemize}
\end{lema}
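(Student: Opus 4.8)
The plan is to deduce all three equivalences from the elementary behaviour of the one--variable function
\[
\rho(\delta):=\Phi_{G,w}\!\left(\tfrac{u}{\delta}\right)=\int_\Omega G\!\left(\frac{|u(x)|}{\delta}\right)w(x)\,dx,\qquad \delta>0,
\]
recalling that, by definition, $\|u\|_{G,w}=\inf\{\delta>0:\rho(\delta)\le1\}$ (with the convention $\inf\emptyset=+\infty$). First I would record two structural facts about $\rho$. \emph{Monotonicity:} since $G$ is nondecreasing and $\delta\mapsto|u(x)|/\delta$ is nonincreasing, $\rho$ is nonincreasing on $(0,\infty)$. \emph{Continuity:} if $\delta_k\downarrow\delta_0$ then $|u|/\delta_k\uparrow|u|/\delta_0$, so $G(|u|/\delta_k)\uparrow G(|u|/\delta_0)$ and monotone convergence yields $\rho(\delta_k)\uparrow\rho(\delta_0)$; if $\delta_k\uparrow\delta_0$ then $|u|/\delta_k\downarrow|u|/\delta_0$, and using that $G$ satisfies the $\Delta_2$ condition contained in \eqref{condp} --- whence $G(2t)\le 2^{p^+}G(t)$ and therefore $G(|u|/\delta_k)\le 2^{p^+}G(|u|/\delta_0)$ once $\delta_k\ge\delta_0/2$ --- dominated convergence yields $\rho(\delta_k)\to\rho(\delta_0)$ whenever $\rho(\delta_0)<\infty$. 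Finally I would use the convexity bound $G(\theta t)\le\theta G(t)$ for $\theta\in[0,1]$, valid because $G$ is convex with $G(0)=0$.

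With these facts in hand I would dispatch the six implications as follows. For (i): if $\Phi_{G,w}(u)=\rho(1)\le1$ then $1$ lies in the defining set, so $\|u\|_{G,w}\le1$; conversely, if $\|u\|_{G,w}\le1$ (the case of an empty defining set being vacuous), pick $\delta_k\downarrow\|u\|_{G,w}$ with $\rho(\delta_k)\le1$ --- if $\|u\|_{G,w}<1$ then $\delta_k\le1$ for large $k$ and $\rho(1)\le\rho(\delta_k)\le1$ by monotonicity, while if $\|u\|_{G,w}=1$ one takes $\delta_k\downarrow1$ and gets $\rho(1)=\lim_k\rho(\delta_k)\le1$ by right--continuity. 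For the ``$\Leftarrow$'' half of (ii): if $\|u\|_{G,w}<1$ there is $\delta<1$ with $\rho(\delta)\le1$, and the convexity bound with $\theta=\delta$ gives
\[
\Phi_{G,w}(u)=\int_\Omega G\!\left(\delta\cdot\frac{|u|}{\delta}\right)w\,dx\le\delta\int_\Omega G\!\left(\frac{|u|}{\delta}\right)w\,dx=\delta\,\rho(\delta)\le\delta<1.
\]
For the ``$\Rightarrow$'' half of (ii): if $\rho(1)<1$ then $\rho(1)<\infty$, so $\rho$ is finite and (by the above) left--continuous at $1$, hence $\rho(\delta_0)<1$ for some $\delta_0<1$, and therefore $\|u\|_{G,w}\le\delta_0<1$.

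Item (iii) is then purely formal: $\|u\|_{G,w}=1$ gives $\rho(1)\le1$ by (i), and $\rho(1)<1$ is excluded by the ``$\Leftarrow$'' half of (ii), so $\rho(1)=1$; conversely $\rho(1)=1$ gives $\|u\|_{G,w}\le1$ by (i), and $\|u\|_{G,w}<1$ is excluded by the ``$\Rightarrow$'' half of (ii), so $\|u\|_{G,w}=1$. The only point requiring care --- and the one I expect to be the main obstacle --- is the left--continuity of $\rho$ at $\delta=1$ used in the ``$\Rightarrow$'' half of (ii): this is exactly where the doubling property in \eqref{condp} is essential, since it produces the $w$--integrable majorant $2^{p^+}G(|u|)$ for $G(|u|/\delta)$ with $\delta\in[\tfrac12,1]$ and hence the needed continuity (the implication being vacuous anyway unless $\Phi_{G,w}(u)<\infty$). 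Everything else reduces to the monotonicity of $G$ and of $\rho$.
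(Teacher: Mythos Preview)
The paper does not supply its own proof of this lemma: it is simply quoted from \cite[Lemma~2.1.14]{DHHR}. Your argument is correct and self-contained. The one substantive observation worth recording is that you have correctly located where the doubling hypothesis \eqref{condp} is genuinely needed: item~(i) holds for any continuous Young function via the monotone convergence (right-continuity) step, and the ``$\Leftarrow$'' half of (ii) needs only convexity and $G(0)=0$; but the ``$\Rightarrow$'' half of (ii) --- and hence (iii) --- can fail without $\Delta_2$, since one can build $u$ and an exponentially growing $G$ with $\Phi_{G,w}(u)<1$ yet $\Phi_{G,w}(u/\delta)=\infty$ for every $\delta<1$. Your use of the majorant $2^{p^+}G(|u|)$ to force left-continuity of $\rho$ at $\delta=1$ is exactly the right fix, and it is consistent with the standing hypotheses of the paper even though the lemma's statement omits mention of \eqref{condp}.
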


Finally, given a Young function $G$, the H\"older's inequality for Young functions reads as (see for instance \cite[Lemma 2.6.5]{DHHR})
$$
\int_\Omega |uv| w(x)\,dx \leq 2\|u\|_{G,w} \|v\|_{\tilde G,w},
$$
where $\tilde G$ is the complementary function of $G$, and $\Omega\subset \R^n$.

\subsection{Orlicz-Sobolev spaces}

Given a Young function $G$, an open set $\Omega\subset \R^n$ and a non-negative measurable and locally integrable function $w$ we define
\begin{align*}
L^{G,w}(\Omega) &:=\{u\colon \R^n\to \R \text{ measurable such that }\Phi_{G,w}(u)<\infty\},\\
W^{1,G}(\Omega)&:=\{ u\in L^G(\Omega) \text{ such that }\Phi_{G}(|\nabla u|)<\infty\},
\end{align*}
where $\nabla u$ is considered in the distributional sense. When $w\equiv 1$ we just write $L^G$. 
These spaces are endowed, respectively,  with the Luxemburg norms $\|u\|_{G,w}$
and
$$
\|u\|_{1,G}=\|u\|_G + \|\nabla u\|_G.
$$

In light of \eqref{condp}, $L^{G,w}(\Omega)$ and $W^{1,G}(\Omega)$ are Banach and separable spaces.

We denote $W^{1,G}_0(\Omega)$ the closure of $C_c^\infty(\Omega)$ in $W^{1,G}(\Omega)$. When $\Omega$ is a Lipschitz domain we have the following characterization
$$
W^{1,G}_0(\Omega):=\{ u \in W^{1,G}(\Omega) \colon u=0 \text{ on } \partial\Omega\}.
$$
Then, due to the Poincar\'e's inequality for $u\in W^{1,G}_0(\Omega)$ we get
$$
\|u\|_G \leq C_p \| \nabla u \|_G,
$$
and then $\| \nabla u \|_G$ is an equivalent norm in $W^{1,G}_0(\Omega)$.

\subsection{Some embedding results} \label{sec.emb}

Let $G$ and $H$ be Young functions. If there exists a positive constant $k$ such that $G(t)\leq H(kt)$ for $t\geq t_0\geq 0$ we write $G\prec H$. Moreover, we write $G\prec\prec H$ when for all $k>0$ it holds that
$$
\lim_{t\to\infty} \frac{G(t)}{H(kt)}=0.
$$
It is obvious that $G\prec\prec H$ implies that $G\prec H$, but the converse does not hold. Moreover, $G \prec H$ if and only if $L^H(\Omega) \subset L^G(\Omega)$ (see for instance \cite[Theorem 3.17.1]{CFK}).

Define
\begin{equation*} 
T_g:=\int_1^\infty \frac{G^{-1}(t)}{t^{1+\frac{1}{n}}}\,dt, \qquad\text{ and } \qquad (G^*)^{-1}(t)=\int_0^t \frac{G^{-1}(s)}{s^{1+\frac{1}{n}}}\,ds, \; t\geq 0,
\end{equation*}
where $G^*$ is the so-called \emph{critical} Young function for the Young function $G$.
Then the following embedding results hold. See for instance \cite{Adams, CFK}.

\begin{prop} \label{embed}
Let $\Omega\subset\R^n$ be a bounded Lipschitz domain and let $G$ and $H$ be two Young functions.

If $T_g=\infty$  and $H\prec\prec G^*$, then the following embedding is compact
$$
W^{1,G}(\Omega)\subset L^{H}(\Omega).
$$ 
In particular, when $u\in W^{1,G}_0(\Omega)$ there is $\kappa=\kappa(n,p^+,p^-)$ such that
$$
\|u\|_{H} \leq \kappa \| \nabla u\|_G.
$$

If $T_g<\infty$ then 
$$
W^{1,G}(\Omega)\subset C^{0,\sigma(t)}(\overline \Omega)
$$
where $\sigma(t)$ is an increasing continuous function for $t\geq 0$ such that $\sigma(0)=0$ and it is given by
\begin{equation} \label{modulo}
\sigma(t)=\int_{t^{-n}}^\infty  \frac{G^{-1}(s)}{s^{1+\frac1n}}\,ds.
\end{equation}
In particular, when $u\in W^{1,G}_0(\Omega)$  there is $\kappa=\kappa(n,p^+,p^-)$ such that
\begin{equation} \label{des1}
\frac{|u(x)-u(y)|}{\sigma(|x-y|)}\leq \kappa \|\nabla u\|_{G}.
\end{equation}
Moreover, we have also that the   embedding $W^{1,G}(\Omega)\subset L^{H}(\Omega)$ is compact.
\end{prop}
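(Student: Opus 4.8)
\emph{Proof strategy.} The whole statement is classical once one has the \emph{sharp continuous Orlicz–Sobolev embedding}
$$
W^{1,G}(\Omega)\subset L^{G^*}(\Omega),\qquad \|u\|_{G^*}\le \kappa_0\,\|u\|_{1,G}\qquad(T_g=\infty),
$$
together with its supercritical (Morrey–Hölder) counterpart; these I would simply quote from \cite{Adams,CFK}, where they are proved by symmetrisation (Pólya–Szeg\H{o}) reducing to a one–dimensional Hardy inequality, or by the Maz'ya truncation method. For $u\in W^{1,G}_0(\Omega)$ one extends by zero, so the Poincaré inequality $\|u\|_G\le C_p\|\nabla u\|_G$ converts the displayed bound into $\|u\|_{G^*}\le\kappa_1\|\nabla u\|_G$; from this point every assertion of the proposition follows by soft arguments, which I now outline.

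\emph{Subcritical case $T_g=\infty$.} Since $H\prec\prec G^*$ implies $H\prec G^*$, on the bounded set $\Omega$ one has $L^{G^*}(\Omega)\subset L^{H}(\Omega)$ continuously; composing with the sharp embedding gives $\|u\|_{H}\le\kappa\|\nabla u\|_G$ for $u\in W^{1,G}_0(\Omega)$. For compactness, observe that $T_g=\infty$ forces $p^-\le n$ (from $G^{-1}(t)\le t^{1/p^-}$ for $t\ge1$, which follows from \eqref{condp}), and that \eqref{condp} also yields $G(t)\ge t^{p^-}-1$, so a bounded sequence in $W^{1,G}(\Omega)$ is bounded in $W^{1,p^-}(\Omega)$; Rellich–Kondrachov extracts a subsequence $u_k\to u$ in $L^{p^-}(\Omega)\hookrightarrow L^1(\Omega)$ and a.e. To upgrade this to convergence in $L^H$, fix $\ve>0$, use $H\prec\prec G^*$ to pick $T$ with $H(t)\le\ve\,G^*(t)$ for $t\ge T$, and split $\Omega$ according to whether $|u_k-u|\le T$ or $|u_k-u|>T$; convexity of $H$ (so $H(s)\le\frac{H(T)}{T}s$ for $0\le s\le T$) gives
$$
\int_\Omega H(|u_k-u|)\,dx\le \frac{H(T)}{T}\,\|u_k-u\|_{L^1(\Omega)}+\ve\,C,
$$
where $C$ bounds $\int_\Omega G^*(|u_k-u|)\,dx$ (finite by the $\Delta_2$ property of $G^*$ and boundedness in $L^{G^*}$). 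Letting $k\to\infty$ and then $\ve\to0$ shows the modular tends to $0$, which under $\Delta_2$ is equivalent to $\|u_k-u\|_H\to0$.

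\emph{Supercritical case $T_g<\infty$.} Extending by a Stein-type operator (valid since $\Omega$ is Lipschitz), or by zero when $u\in W^{1,G}_0(\Omega)$, we work on $\R^n$. For a ball $B$ of radius $R$ and $x\in B$ the classical oscillation estimate $|u(x)-u_B|\le c_n\int_B|\nabla u(z)|\,|x-z|^{1-n}\,dz$ holds (first for smooth $u$, then by density); applying it at both endpoints with $B=B_{2|x-y|}(x)$ and invoking Hölder's inequality for Young functions yields
$$
|u(x)-u(y)|\le c_n\,\|\nabla u\|_{G}\,\big\|\,|\cdot|^{1-n}\,\big\|_{\tilde G,\,B_{2|x-y|}}.
$$
A direct computation of the Luxemburg norm on the right — substituting $s=t^{-n}$ in $\int_{B_r}\tilde G(|z|^{1-n}/\delta)\,dz$ and comparing with the defining integral \eqref{modulo} of $\sigma$ — bounds it by $c\,\sigma(|x-y|)$; since $T_g<\infty$ makes $\sigma$ a genuine modulus of continuity ($\sigma$ increasing, continuous, $\sigma(0)=0$), this is \eqref{des1} and hence $W^{1,G}(\Omega)\subset C^{0,\sigma}(\overline\Omega)$. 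Finally, bounded subsets of $C^{0,\sigma}(\overline\Omega)$ are equicontinuous and, adding the mean-value bound $|u_\Omega|\le c\|u\|_G$, uniformly bounded, so Arzelà–Ascoli gives compactness into $C(\overline\Omega)$; as $\Omega$ is bounded, $C(\overline\Omega)\subset L^\infty(\Omega)\subset L^H(\Omega)$ continuously, and composition yields the compact embedding $W^{1,G}(\Omega)\subset L^H(\Omega)$.

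\emph{Main obstacle.} The one genuinely nontrivial ingredient is the sharp continuous embedding into $L^{G^*}$ (equivalently, the supercritical Hölder estimate): it admits no elementary proof and is precisely where rearrangement / Hardy-inequality technology enters, so I would cite it rather than reprove it. A secondary subtlety, easy to overlook, is that Orlicz norms do not interpolate as cleanly as Lebesgue norms, which is why the compactness step is run through the modular and the uniform-integrability estimate above instead of a one-line interpolation between $L^{p^-}$ and $L^{G^*}$.
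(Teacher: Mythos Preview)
The paper does not actually prove this proposition: it is stated as a known result and simply referenced with ``See for instance \cite{Adams, CFK}'' immediately before the statement. Your outline is a reasonable and essentially correct sketch of the classical arguments one finds in those references (sharp Orlicz--Sobolev embedding plus a soft compactness upgrade via a modular splitting in the subcritical case, Morrey-type oscillation estimate plus Arzel\`a--Ascoli in the supercritical case), and indeed you yourself acknowledge that the key ingredient --- the sharp embedding into $L^{G^*}$ --- should be quoted from \cite{Adams,CFK} rather than reproved; so your approach and the paper's are aligned, with yours simply supplying more of the standard details.
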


The following result states some embeddings for weighted Orlicz spaces  which are useful for our purposes.
\begin{prop}  \label{embed.w}
Let $\Omega\subset\R^n$ be a bounded Lipschitz domain and let $G$ and $H$ be Young functions satisfying condition \eqref{cond}.

If $T_g=\infty$  and $H\prec\prec G^*$, then the following embedding 
$$
W^{1,G}(\Omega)\subset L^{H,w}(\Omega)
$$ 
is compact for 
\begin{itemize}
\item [(i)] $w\in L^\infty(\Omega)$;
\item[(ii)] more generally, for $w\in L^{\tilde B}(\Omega)$, with $B\prec A$ where $\tilde B$ is the conjugated function of $B$ and $A$ is the Young function given by $A=G^*\circ G^{-1}$.

\end{itemize}

If $T_g<\infty$, then 
$$
W^{1,G}(\Omega)\subset C^{0,\sigma(t)}(\overline \Omega)
$$
where $\sigma$ is given in \eqref{modulo}. Moreover, the embedding 
$$
W^{1,G}(\Omega)\subset L^{H,w}(\Omega)
$$ 
is compact for $w\in L^\infty(\Omega)$.
\end{prop}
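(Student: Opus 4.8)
The plan is to deduce the weighted embeddings from the unweighted ones in Proposition \ref{embed} by using H\"older's inequality for Young functions together with the structural identity $A = G^* \circ G^{-1}$. First consider the case $T_g = \infty$, $H \prec\prec G^*$, and part (i), i.e. $w \in L^\infty(\Omega)$. Given $u \in W^{1,G}(\Omega)$, one estimates the modular directly:
\[
\Phi_{H,w}(u) = \int_\Omega H(|u|)\,w(x)\,dx \leq \|w\|_{L^\infty(\Omega)} \int_\Omega H(|u|)\,dx = \|w\|_{L^\infty(\Omega)} \Phi_H(u),
\]
which by the norm--modular comparison (Lemma \ref{equiv}) and the $\Delta_2$ condition \eqref{condp} gives $\|u\|_{H,w} \leq c(\|w\|_{L^\infty}) \|u\|_H$, so $L^H(\Omega) \hookrightarrow L^{H,w}(\Omega)$ is continuous. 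Composing with the compact embedding $W^{1,G}(\Omega) \subset L^H(\Omega)$ from Proposition \ref{embed} gives compactness of $W^{1,G}(\Omega) \subset L^{H,w}(\Omega)$. The case $T_g < \infty$ is entirely analogous: $W^{1,G}(\Omega) \subset C^{0,\sigma(t)}(\overline\Omega) \subset L^\infty(\Omega)$, and then the same $L^\infty$ bound on $w$ handles the weighted target space.

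The substantive part is (ii): $w \in L^{\tilde B}(\Omega)$ with $B \prec A$ and $A = G^* \circ G^{-1}$. The idea is to pick an intermediate Young function — here I would use $A$ itself, or an $\bar A$ with $B \prec \bar A \prec A$ — and split by H\"older. Write $H(|u|)\,w$ and apply H\"older's inequality for Young functions with the complementary pair $(\bar A, \tilde{\bar A})$:
\[
\int_\Omega H(|u|)\,w(x)\,dx \leq 2\, \big\| H(|u|) \big\|_{\bar A}\, \|w\|_{\tilde{\bar A}}.
\]
One then must control $\| H(|u|) \|_{\bar A}$ by $\|u\|_{G^*}$, which is where the identity $A = G^* \circ G^{-1}$ enters: since $\bar A \prec A = G^* \circ G^{-1}$, we have $\bar A(H(t)/\delta) \le A(H(t)/\delta)$ for large argument, and $A(H(t)) = G^*(G^{-1}(H(t)))$; because $H \prec\prec G^*$ one can show $G^{-1}\circ H \prec\prec \mathrm{id}$ in a suitable sense, forcing the modular $\int_\Omega \bar A(H(|u|)/\delta)\,dx$ to be finite and controlled, hence $\|H(|u|)\|_{\bar A} \lesssim \Psi(\|u\|_{G^*})$ for an appropriate modulus $\Psi$. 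Finally, since $H \prec\prec G^*$ and $T_g = \infty$, Proposition \ref{embed} gives $W^{1,G}(\Omega) \subset L^{G^*}(\Omega)$ continuously (indeed compactly), and a standard interpolation/Vitali argument upgrades the continuous weighted embedding to a compact one: if $u_k \rightharpoonup 0$ in $W^{1,G}$, then $u_k \to 0$ in $L^{H_1}$ for some $H_1$ with $H \prec\prec H_1 \prec\prec G^*$, and the H\"older splitting with the gap in the scale forces $\|u_k\|_{H,w} \to 0$.

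The main obstacle I anticipate is the bookkeeping in step (ii): choosing the intermediate function $\bar A$ correctly, verifying that $B \prec A$ translates into the needed integrability of $w$ against $\bar A$, and making rigorous the passage "$H \prec\prec G^*$ implies $G^{-1}\circ H$ is subordinate to the identity" so that $\|H(|u|)\|_{\bar A}$ is genuinely dominated by a function of $\|u\|_{G^*}$ that vanishes along sequences converging to zero. The $\Delta'$ condition \eqref{cond} and its consequence \eqref{cond1} — i.e. $G^{-1}(s)G^{-1}(t) \le c\, G^{-1}(st)$ — are exactly the tools that let one manipulate these compositions of inverses cleanly, and I expect them to be used repeatedly to turn the abstract subordination relations into quantitative modular estimates. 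Compactness itself, once the continuous embedding and the scale gap are in hand, follows from the compactness already recorded in Proposition \ref{embed} together with a routine dominated-convergence (Vitali) argument, so no new compactness machinery is needed.
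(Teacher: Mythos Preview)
Your treatment of case (i) and of the case $T_g<\infty$ is correct and is exactly what the paper does: bound the weighted modular by $\|w\|_\infty$ times the unweighted one (the paper uses the $\Delta'$ condition to package this as the explicit norm inequality $\|u\|_{H,w}\le \|u\|_H/H^{-1}(C^{-1}\|w\|_\infty^{-1})$, but the mechanism is the same), then invoke Proposition~\ref{embed}.

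For (ii) the paper proceeds differently and more concretely. It does \emph{not} try to bound $\|H(|u|)\|_{\bar A}$; instead it first uses $H\prec G$ (an assumption it silently imports---the proposition as stated only has $H\prec\prec G^*$) to replace $H$ by $G$, and then exploits the exact identity $A\circ G=G^*$. Writing $v=u/(c\|u\|_{G^*})$, condition~\eqref{cond1} gives $G^{-1}\bigl(G(|v|)/G(c\|v\|_{G^*})\bigr)\le |v|/\|v\|_{G^*}$, hence
\[
\int_\Omega A\!\left(\frac{G(|v|)}{G(c\|v\|_{G^*})}\right)dx
=\int_\Omega G^*\!\left(G^{-1}\!\left(\frac{G(|v|)}{G(c\|v\|_{G^*})}\right)\right)dx
\le \int_\Omega G^*\!\left(\frac{|v|}{\|v\|_{G^*}}\right)dx=1,
\]
so $\|G(|v|)\|_A\le G(c\|v\|_{G^*})$, and one obtains the clean bound $\|u\|_{H,w}\le C(w)\,\|u\|_{G^*}$ with no intermediate $\bar A$ and no need for the heuristic ``$G^{-1}\circ H\prec\prec\mathrm{id}$''. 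Your route is more faithful to the stated hypothesis $H\prec\prec G^*$ (it avoids the extra $H\prec G$), but the sentence ``one can show $G^{-1}\circ H\prec\prec\mathrm{id}$ in a suitable sense, forcing $\|H(|u|)\|_{\bar A}\lesssim\Psi(\|u\|_{G^*})$'' is precisely the nontrivial step, and as written it is an assertion rather than an argument.

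On compactness in (ii): note that the paper's bound lands in $L^{G^*}$, which is the \emph{critical} space, so its closing line ``follows by Proposition~\ref{embed}'' strictly yields only continuity. Your Vitali/interpolation idea with an intermediate $H_1$ satisfying $H\prec\prec H_1\prec\prec G^*$ is the right way to upgrade to compactness; if you pursue it, make explicit how the gap $H\prec\prec H_1$ combines with the H\"older splitting to force $\|u_k\|_{H,w}\to 0$ along sequences converging weakly to zero in $W^{1,G}$.
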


\begin{proof}

Let us assume first that $w\in L^\infty(\Omega)$. Given given $u\in W^{1,G}(\Omega)\cap L^H(\Omega)$ define the number 
$$
\delta=\frac{\|u\|_H}{H^{-1}(C^{-1}\|w\|_\infty^{-1})}
$$
being $C$ the constant of condition \eqref{cond}. Then, the  definition of the Luxemburg norm together with \eqref{cond}  yields
\begin{align*}
\int_\Omega H\left( \frac{|u|}{\delta} \right) w(x)\,dx &\leq  
\int_\Omega H\left( \frac{|u|}{\|u\|_H} H^{-1}(C^{-1}\|w\|_\infty^{-1})\right) \|w\|_\infty \,dx\\  &\leq 
\int_\Omega H\left( \frac{|u|}{\|u\|_H}\right) \,dx =1,
\end{align*}
from where, again by definition of the Luxemburg norm, we get
$$
\|u\|_{H,w}\leq \frac{\|u\|_H}{H^{-1}(C^{-1}\|w\|_\infty^{-1})}.
$$
which proves that $L^H(\Omega)\subset L^{H,w}(\Omega)$. Then the result follows in light of  Proposition \ref{embed}.

Assume now that $w\in L^{\tilde A}(\Omega)$.
Observe that  it always holds that $G\prec \prec G^*$ (see for instance \cite[Example 6.3]{DSSSS}). Moreover,    since $A\circ G=G^*$ is a Young function, by \cite{KR}[Chapter I.1.2], $A$ is also a Young function.

Let $u\in W^{1,G}(\Omega)$ and let us define the number
$$
\delta= \frac{c k \|u\|_{G^*}}{ G^{-1}\left( \frac{1}{2    C \|w\|_{\tilde A}}\right) }
$$
where $k$ is the constant for which $H\prec G$,   and $C$ is given in \eqref{cond}. By using \eqref{cond} and H\"older's inequality for Young functions we get
\begin{align*}
\int_\Omega H\left( \frac{|u|}{\delta}\right) w(x)\,dx &\leq  \int_\Omega  G\left( \frac{k|u|}{\delta}\right) w(x)\,dx\\
&\leq \frac{1}{2\|w\|_{\tilde A}} \int_\Omega  G\left( \frac{|u|}{c\|u\|_{G^*}} \right)  w(x)\,dx\\
&\leq \frac{1}{\|w\|_{\tilde A}} \cdot \left\| G\left( \frac{|u|}{c\|u\|_{G^*}} \right) \right\|_A \|w\|_{\tilde A} = \left\| G\left( \frac{|u|}{c\|u\|_{G^*}} \right) \right\|_A.
\end{align*}
To estimate the last norm denote $v=\frac{u}{c\|u\|_{G^*}}$. Then,  by using \eqref{cond1}
\begin{align*}
G^{-1}\left( \frac{G(|v|)}{ G(c\|v\|_{G^*})} \right) &= 
\frac{G^{-1}(G(c\|v\|_{G^*}))}{ c\|v\|_{G^*} } G^{-1}\left( \frac{G(|v|)}{ G(c\|v\|_{G^*})} \right)\\
&\leq
 \frac{1}{ \|v\|_{G^*} } G^{-1}\left(G(c\|v\|_{G^*})  \frac{G(|v|)}{G(c\|v\|_{G^*})}  \right) =
 \frac{ |v|}{ \|v\|_{G^*} }.
\end{align*}
As a consequence, the definition of the Luxemburg norm gives
\begin{align*}
\int_\Omega A\left( \frac{G(|v|)}{G(c\|v\|_{G^*})} \right)\,dx &= 
\int_\Omega G^*\circ G^{-1}\left( \frac{G(|v|)}{ G(c\|v\|_{G^*})} \right)\,dx\\
&\leq 
\int_\Omega G^*\left(  \frac{|v|}{ \|v\|_{G^*} }  \right) \,dx=1,
\end{align*}
and using again the definition of the norm,  we arrive at
$$
\|G(|v|) \|_A  \leq G(c\|v\|_{G^*})
$$
that is, 
$$
\left\| G\left( \frac{|u|}{c\|u\|_{G^*}} \right) \right\|_A    \leq  G\left( c \left\| \frac{u}{c\|u\|_{G^*}}\right\|_{G^*} \right)=1.
$$
The previous inequalities lead to
$$
\int_\Omega H\left( \frac{|u|}{\delta}\right) w(x)\,dx \leq \left\| G\left( \frac{|u|}{c\|u\|_{G^*}} \right) \right\|_A =1
$$
since the norm is $1-$homogeneous. So, by the definition of Luxemburg norm
$$
\|u\|_{H,w} \leq \delta =  \frac{c k \|u\|_{G^*}}{ G^{-1}\left( \frac{1}{2    C \|w\|_{\tilde A}}\right) }.
$$
Then the result in the case $w\in L^{\tilde A}(\Omega)$   follows by Proposition \ref{embed}. The case $w\in L^{\tilde B}(\Omega)$ with $B\prec A$ is analogous. The proof is now complete.
\end{proof}

\section{The nonlinear eigenvalue problem} \label{sec.autov}
Given Young functions $G$ and $H$ satisfying \eqref{cond}  denote $h=H'$, $g=G'$. When $T_g=\infty$  we further assume that $H\prec\prec G^*$. In this section we deal with the following problem
\begin{align} \label{eigen}
\begin{cases}
-\diver\left( g(|\nabla u|)\frac{\nabla u}{|\nabla u|}\right) = \lam w(x) h(|u|)\frac{u}{|u|} &\text{ in } \Omega,\\
u=0 &\text{ on } \partial\Omega,
\end{cases}
\end{align}
where $\lam\in\R$ is a parameter and $\Omega\subset \R^n$ $n\geq 1$ is a bounded domain with Lipschitz boundary. We assume that $w$ is a non-negative weight function with the following integrability: when $T_g<\infty$,  then $w\in L^\infty(\Omega)$; when $T_g=\infty$ and $H\prec\prec G^*$, then $w\in L^\infty(\Omega)$, or, more generally,  $w\in L^{\tilde B}(\Omega)$, with $B\prec A:=G^*\circ G^{-1}$.

In order to study \eqref{eigen} we consider the functionals $I$, $J\colon W^{1,G}_0(\Omega)\to \R$ defined as
$$
I(u)=\int_\Omega G(|\nabla u|)\,dx, \qquad J(u)=\int_\Omega wH(|u|)\,dx.
$$
It is easy to see that $I$ and $J$ are Fr\'echet derivable with derivatives given by
$$
\langle I'(u),v \rangle =  \int_\Omega g(|\nabla u|)\frac{\nabla u}{|\nabla u|}\cdot \nabla v \,dx, \qquad 
\langle J'(u),v \rangle =  \int_\Omega w h(|u|) \frac{uv}{|u|}\,dx.
$$

Given $\mu>0$ we define the set $\mathcal{A}_\mu$ of normalized functions as
$$
\A_\mu:=\left\{u\in W^{1,G}_0(\Omega)\colon \int_\Omega w H(|u|)\,dx =\mu\right\}.
$$
Then, we say that $\lam$ is an eigenvalue of \eqref{eigen} with eigenfunction $u\in \A_\mu$ if 
$$
\langle I'(u),v \rangle = \lam  \langle J'(u),v \rangle
$$
for all $v\in W^{1,G}_0(\Omega)$.

The set of all eigenvalues with  normalized eigenfunctions is denoted as
$$
\Sigma_\Lambda=\{\lam\in \R \colon \lam \text{ is an eigenvalue of }\eqref{eigen} \text{ with eigenfunction } u\in \mathcal{A}_\mu \text{ for some } \mu\leq \Lambda  \}.
$$

\begin{prop} \label{prop.lam1}
For each $\mu>0$, the quantity
\begin{equation} \label{eigen.variac}
\lam_{1,\mu}:=\inf\left\{\frac{I(u)}{J(u)}\colon u\in \A_\mu \right\}.
\end{equation}
is an eigenvalue of \eqref{eigen}.

The corresponding eigenfunction $u_{1,\mu}\in \A_\mu$ may be assumed to be strictly   positive in $\Omega$ and $u_{1,\mu} \in C^{1,\alpha}(\overline \Omega)$ for some $\alpha\in (0,1)$.
\end{prop}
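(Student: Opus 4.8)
The plan is to follow the classical variational recipe for nonlinear eigenvalue problems, adapted to the Orlicz setting, and then invoke regularity theory. First I would show that the infimum in \eqref{eigen.variac} is attained. Fix $\mu>0$ and take a minimizing sequence $\{u_k\}\subset \A_\mu$, i.e. $\int_\Omega wH(|u_k|)\,dx=\mu$ and $I(u_k)=\int_\Omega G(|\nabla u_k|)\,dx \to \lam_{1,\mu}$. By \eqref{condp} the sequence $\|\nabla u_k\|_G$ is bounded (here one uses that $\Phi_G$ controls the Luxemburg norm via Lemma \ref{equiv} together with the doubling condition), and by Poincar\'e's inequality $\{u_k\}$ is bounded in $W^{1,G}_0(\Omega)$. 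Since $W^{1,G}_0(\Omega)$ is reflexive (a consequence of \eqref{condp}, as $G$ and $\tilde G$ satisfy $\Delta_2$), a subsequence converges weakly, $u_k\cd u$ in $W^{1,G}_0(\Omega)$. The compact embedding $W^{1,G}(\Omega)\subset L^{H,w}(\Omega)$ from Proposition \ref{embed.w} — which applies precisely under the running hypotheses on $w$ and on the growth of $H$ relative to $G^*$ — gives (along a further subsequence) $u_k\to u$ in $L^{H,w}(\Omega)$ and a.e., whence $\int_\Omega wH(|u|)\,dx=\mu$, so $u\in\A_\mu$ and in particular $u\not\equiv 0$. By convexity of $G$, the functional $I$ is weakly lower semicontinuous on $W^{1,G}_0(\Omega)$, so $I(u)\leq \liminf_k I(u_k)=\lam_{1,\mu}$; combined with $I(u)\geq \lam_{1,\mu}$ from the definition, we get $I(u)=\lam_{1,\mu}$ and the infimum is attained at $u=:u_{1,\mu}$.

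Next I would derive the Euler--Lagrange equation by Lagrange multipliers. Both $I$ and $J$ are Fr\'echet differentiable with the derivatives written in the excerpt, and $\langle J'(u_{1,\mu}),u_{1,\mu}\rangle = \int_\Omega wh(|u_{1,\mu}|)|u_{1,\mu}|\,dx \geq p^-\mu>0$ by \eqref{condp}, so the constraint is nondegenerate at the minimizer. Hence there is $\lam\in\R$ with $I'(u_{1,\mu})=\lam J'(u_{1,\mu})$ in $(W^{1,G}_0(\Omega))^*$, i.e. $\langle I'(u_{1,\mu}),v\rangle=\lam\langle J'(u_{1,\mu}),v\rangle$ for all $v$; testing with $v=u_{1,\mu}$ and using \eqref{condp} to see the right side is positive identifies $\lam=I(u_{1,\mu})/J(u_{1,\mu})=\lam_{1,\mu}>0$. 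Thus $\lam_{1,\mu}$ is an eigenvalue with eigenfunction $u_{1,\mu}\in\A_\mu$.

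For the qualitative properties: since $G(|\nabla u|)=G(|\nabla |u||)$ a.e. and $H(|u|)=H(||u||)$, the function $|u_{1,\mu}|$ is also a minimizer, so we may assume $u_{1,\mu}\geq 0$. The eigenfunction solves $-\diver(g(|\nabla u|)\nabla u/|\nabla u|)=\lam_{1,\mu}\,w\,h(u)\geq 0$ weakly; because $w\in L^\infty$ (or $L^{\tilde B}$) and $h$ has the growth from \eqref{condp}, the right-hand side lies in a suitable Lebesgue/Orlicz class, and the $C^{1,\alpha}(\overline\Omega)$ regularity for the $g$-Laplacian with bounded data (Lieberman-type estimates for operators satisfying \eqref{condp}; see \cite{autov1,autov2}) applies. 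Finally the Harnack inequality for the $g$-Laplacian, applied to the nonnegative supersolution $u_{1,\mu}$, forces $u_{1,\mu}>0$ throughout the connected set $\Omega$ (if $u_{1,\mu}$ vanished at an interior point it would vanish identically, contradicting $u_{1,\mu}\in\A_\mu$). I expect the main obstacle to be the soft functional-analytic step — establishing reflexivity/weak lower semicontinuity and, above all, correctly invoking the compact embedding of Proposition \ref{embed.w} under each of the two weight regimes so that the constraint passes to the weak limit; the regularity and positivity, while technically substantial, are essentially citations to the $g$-Laplacian literature.
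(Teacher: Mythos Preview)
Your overall strategy --- direct method on $\A_\mu$, compact embedding from Proposition~\ref{embed.w}, weak lower semicontinuity of $I$, then an Euler--Lagrange equation plus regularity --- coincides with the paper's, and the existence part is fine.

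There is one genuine gap: the identification of the Lagrange multiplier with $\lam_{1,\mu}$. Testing $I'(u)=\lambda J'(u)$ with $v=u_{1,\mu}$ gives
\[
\int_\Omega g(|\nabla u|)\,|\nabla u|\,dx \;=\; \lambda \int_\Omega w\,h(|u|)\,|u|\,dx,
\]
but since $G$ and $H$ are not homogeneous, neither side equals $I(u)$ or $J(u)$; condition~\eqref{condp} only yields $p^-G(t)\le tg(t)\le p^+G(t)$ (and similarly for $H$), so at best you obtain $\lambda\in\big[\tfrac{p^-}{p^+}\lam_{1,\mu},\,\tfrac{p^+}{p^-}\lam_{1,\mu}\big]$, not $\lambda=\lam_{1,\mu}$. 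This is precisely the point at which the Orlicz problem departs from the $p$-Laplacian case. The paper does not go through a Lagrange multiplier: it differentiates the quotient $\mathcal K=I/J$ itself and writes $\langle\mathcal K'(u),v\rangle=0$ for every $v\in W^{1,G}_0(\Omega)$, after which the quotient rule produces $I'(u)=\tfrac{I(u)}{J(u)}\,J'(u)=\lam_{1,\mu}\,J'(u)$ directly, so the value $\lam_{1,\mu}$ appears as the coefficient rather than having to be recovered a posteriori.

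The qualitative part is essentially the same in both arguments, with only cosmetic differences: the paper invokes the Strong Maximum Principle and the $C^{1,\alpha}$ regularity from Lieberman~\cite{L} (not \cite{autov1,autov2}), whereas you pass to $|u_{1,\mu}|$ first and then argue via Harnack. The content is equivalent, but the correct reference for both regularity and positivity is \cite{L}.
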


\begin{rem}
We call to \eqref{eigen.variac}  \emph{first eigenvalue} at level $\mu$ since there is no variational eigenvalue $\lam \in (0,\lam_{1,\mu})$ with eigenfunction in $\A_\mu$.  Indeed, if there is $\lam\in(0,\lam_{1,\mu})$ such that $\lam=I(u)/J(u)$ for some $u\in \A_\mu$, then it would be $\lam<\lam_1$ by definition of $\lam_{1,\mu}$, a contradiction.
\end{rem}

\begin{proof}
Let $\{v_k\}_{k\in\N}\in \A_\mu$ be a minimizing sequence in $\A_\mu$ for $\lam_{1,\mu}$, that is, if we denote $\mathcal{K}:=\frac{I}{J}$, 
$$
\lim_{k\to\infty} \mathcal{K}(v_k)=\lam_{1,\mu} = \inf_{u\in \A_\mu} \mathcal{K}(u).
$$
Then $\mathcal{K}(v_k)=\mu^{-1}I(v_k)\leq 1+\lam_{1,\mu}$ for $k$ big enough, from where $\{v_k\}_{k\in\N}$ is bounded in $W^{1,G}_0(\Omega)$. Therefore, due to Proposition \ref{embed.w}, up to a subsequence,  there is a function  $u\in W^{1,G}_0(\Omega)$ such that
$$
v_k\cd u \text{ weakly in }W^{1,G}_0(\Omega), \quad 
v_k\to u \text{ strongly in }L^{H,w}(\Omega) \text{ and a.e. }
$$ 
which gives that $u\in \mathcal{A}_\mu$. This yields
$$
\lam_{1,\mu} \leq \mathcal{K}(u) = \mu^{-1}I(u) \leq \mu^{-1} \liminf_{k\to\infty} I(v_k) = \lim_{k\to\infty} \mathcal{K}(v_k)=\lam_{1,\mu},
$$
where we have used the lower semicontinuity of the modular with respect to the weak convergence. This proves that $u$ attains the infimum in \eqref{eigen.variac}.

Since $I$ and $J$ are differentiable we have
\begin{align*}
0&=\langle \mathcal{K}'(u),v \rangle=\langle \frac{I'(u) J(u)}{I(u)I(u)},v \rangle-\langle \frac{J'(u)}{I(u)},v \rangle \\
&=\frac{1}{\lam_{1,\mu}}\langle \frac{I'(u) }{I(u)},v \rangle-\langle \frac{J'(u)}{I(u)},v \rangle  \qquad \forall v\in W^{1,G}_0(\Omega)
\end{align*}
from where $\langle I'(u),v \rangle = \lam_{1,\mu}  \langle J'(u),v \rangle$ $\forall v\in W^{1,G}_0(\Omega)$ and then $\lam_{1,\mu}$ is an eigenvalue of \eqref{eigen}.

If $u_{1,\mu}\in \mathcal{A}_\mu$ is an eigenfunction corresponding to $\lam_{1,\mu}$, then $u_{1,\mu}$ is strictly positive (or strictly negative) in $\Omega$ by the Strong Maximum Principle (see for instance \cite[Lemma 6.4]{L}). Moreover, due to the regularity  theory developed in \cite{L}, $u_{1,\mu}\in L^\infty(\Omega)$ and therefore it belongs to $C^{1,\alpha}(\bar \Omega)$ for some $\alpha\in (0,1)$.
\end{proof}

\begin{rem} \label{rem.autov}
In particular, if $u_{1,\mu}\in \A_\mu$ is a function attaining the  minimum in \eqref{eigen.variac} it follows that
$$
I(u_{1,\mu})=\lam_{1,\mu} J(u_{1,\mu}).
$$
\end{rem}

We prove now that the eigenvalues defined in \eqref{eigen.variac} are ordered according their normalization as follows.

\begin{prop} \label{eig.mono}
Given $0<\mu_1<\mu_2$ we have that
$$
\mu_1\lam_{1,\mu_1} \leq \mu_2 \lam_{1,\mu_2}.
$$
\end{prop}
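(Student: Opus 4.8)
The plan is to reduce the statement to a monotonicity property of the constrained energy. Note that every $u\in\A_\mu$ satisfies $J(u)=\mu$, so that
$$
\mu\,\lam_{1,\mu}=\inf\Big\{ I(u)\colon u\in W^{1,G}_0(\Omega),\ J(u)=\mu\Big\}.
$$
Thus the claim is equivalent to showing that $\mu\mapsto \inf\{I(u)\colon J(u)=\mu\}$ is nondecreasing, and the natural way to see this is to take a competitor realizing (or nearly realizing) the infimum at the level $\mu_2$ and rescale it down to the level $\mu_1$ without increasing its energy.

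Concretely, I would fix the eigenfunction $u_2\in\A_{\mu_2}$ provided by Proposition \ref{prop.lam1}, so that $J(u_2)=\mu_2$ and, by Remark \ref{rem.autov}, $I(u_2)=\mu_2\lam_{1,\mu_2}$. Consider $\varphi(t):=J(tu_2)=\int_\Omega w(x)H(t|u_2|)\,dx$ for $t\in[0,1]$. Since $H$ is continuous with $H(0)=0$ and $0\le w(x)H(t|u_2|)\le w(x)H(|u_2|)\in L^1(\Omega)$ for all $t\in[0,1]$, dominated convergence shows that $\varphi$ is continuous on $[0,1]$ with $\varphi(0)=0$ and $\varphi(1)=\mu_2>\mu_1$. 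By the intermediate value theorem there is $t^\ast\in(0,1)$ with $\varphi(t^\ast)=\mu_1$, i.e. $t^\ast u_2\in\A_{\mu_1}$. Because $G$ is nondecreasing and $t^\ast<1$, we have $G(t^\ast|\nabla u_2|)\le G(|\nabla u_2|)$ pointwise, hence $I(t^\ast u_2)\le I(u_2)$.

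Putting these together yields
$$
\mu_1\lam_{1,\mu_1}\le I(t^\ast u_2)\le I(u_2)=\mu_2\lam_{1,\mu_2},
$$
which is the desired inequality. (One could equally avoid appealing to the existence of $u_2$ and run the same rescaling on an arbitrary $u\in\A_{\mu_2}$, then pass to the infimum over such $u$.) I do not expect any serious obstacle here: the only point requiring a little care is the continuity of $\varphi$ and the consequent use of the intermediate value theorem, which rests precisely on the integrability $wH(|u_2|)\in L^1(\Omega)$ built into the normalization $u_2\in\A_{\mu_2}$; everything else is just the monotonicity of $G$ together with $t^\ast<1$.
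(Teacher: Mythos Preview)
Your proof is correct and follows essentially the same route as the paper: take the minimizer $u_{1,\mu_2}\in\A_{\mu_2}$, find a scaling factor in $(0,1)$ bringing it into $\A_{\mu_1}$, and use monotonicity of $G$ to compare energies. The only difference is cosmetic: where the paper simply asserts the existence of such a $\kappa\in(0,1)$, you justify it explicitly via dominated convergence and the intermediate value theorem.
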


\begin{proof}
Let $0<\mu_1<\mu_2$ be fixed numbers and consider the eigenvalues $\lam_{1,\mu_i}$  with corresponding eigenfunctions $u_{1,\mu_i} \in \A_{\mu_1}$, $i=1,2$. By Proposition \ref{prop.lam1} $u_{1,\mu_i}$ can be assumed to be positive in $\Omega$, $i=1,2$.

Since $H$ is increasing, $w$ is positive, $u_{1,\mu_2}>0$ in $\Omega$ and $\mu_2>\mu_1$, there exists $\kappa=\kappa(\mu_1,\mu_2)\in (0,1)$ such that
$$
\mu_2 = \int_\Omega wH(u_{1,\mu_2})\,dx > \int_\Omega wH( \kappa u_{1,\mu_2}) \,dx=\mu_1.
$$
Then the function $\kappa u_{1,\mu_2} \in \A_{\mu_1}$ can be taken as a test for $\lam_{1,\mu_1}$ to get
$$
\mu_1 \lam_{1,\mu_1} \leq \int_\Omega G( \kappa|\nabla u_{\mu_2} |)\,dx \leq  \int_\Omega G(|\nabla u_{\mu_2}|)\,dx= \mu_2 \lam_{1,\mu_2},
$$
giving the result.
\end{proof}

\begin{cor} \label{coro}
Given $\mu\geq 1$ we have that $\lam_{1,\mu} \geq \frac{\lam_{1,1}}{\mu}$.
\end{cor}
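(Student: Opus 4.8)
The plan is to derive this immediately from Proposition \ref{eig.mono}, which orders the eigenvalues by their normalization level. First I would dispose of the trivial case $\mu = 1$, where the claimed inequality is an equality. For $\mu > 1$, I would apply Proposition \ref{eig.mono} with the choice $\mu_1 = 1$ and $\mu_2 = \mu$; this is legitimate precisely because $0 < 1 < \mu$. The conclusion of that proposition then reads $1 \cdot \lam_{1,1} \leq \mu\, \lam_{1,\mu}$, and dividing through by $\mu > 0$ yields $\lam_{1,\mu} \geq \lam_{1,1}/\mu$, as desired.

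There is no real obstacle here: the corollary is a direct specialization of the monotonicity statement already established, and the only thing to note is that $\mu \geq 1$ guarantees the hypothesis $\mu_1 < \mu_2$ (or equality, handled separately) needed to invoke Proposition \ref{eig.mono}. One could also phrase it without case distinction by observing that for $\mu = 1$ the inequality $1 \cdot \lam_{1,1} \leq \mu\,\lam_{1,\mu}$ holds trivially, so in all cases $\mu \geq 1$ one has $\lam_{1,1} \leq \mu\,\lam_{1,\mu}$.

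\begin{proof}
If $\mu = 1$ the statement is trivial. If $\mu > 1$, apply Proposition \ref{eig.mono} with $\mu_1 = 1$ and $\mu_2 = \mu$ to obtain $\lam_{1,1} = 1\cdot\lam_{1,1} \leq \mu\,\lam_{1,\mu}$. Dividing by $\mu > 0$ gives $\lam_{1,\mu} \geq \lam_{1,1}/\mu$.
\end{proof}
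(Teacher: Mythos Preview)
Your proof is correct and is precisely the intended argument: the paper states this corollary immediately after Proposition \ref{eig.mono} without proof, because it follows directly by taking $\mu_1=1$, $\mu_2=\mu$ in that proposition (with the case $\mu=1$ trivial).
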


Finally we prove the closedness of the set $\Sigma_\Lambda$ for each $\Lambda>0$.

\begin{prop} \label{prop.closed}
Further assume that $G$ satisfies \eqref{condp'}. Given $\Lambda>0$, then $\Sigma_\Lambda$ is a closed set.
\end{prop}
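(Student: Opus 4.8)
The plan is a sequential-compactness argument. Given $\lam_k\in\Sigma_\Lambda$ with $\lam_k\to\lam$, pick for each $k$ an eigenfunction $u_k\in\A_{\mu_k}$ with $0<\mu_k\le\Lambda$, so that $\langle I'(u_k),v\rangle=\lam_k\langle J'(u_k),v\rangle$ for every $v\in W^{1,G}_0(\Omega)$. The first step is to test the equation with $v=u_k$: using \eqref{condp} for $G$ (constants $p^\pm$) and the analogous two-sided bound for $h=H'$ (constants $q^\pm$, available since $H$ satisfies $\Delta_2$), this gives
\[
p^-I(u_k)\le\langle I'(u_k),u_k\rangle=\lam_k\langle J'(u_k),u_k\rangle\le q^+\lam_k\mu_k\le q^+\Big(\sup_k\lam_k\Big)\Lambda,
\]
so $\{I(u_k)\}$ is bounded and hence $\{u_k\}$ is bounded in $W^{1,G}_0(\Omega)$. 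By the compact embedding of Proposition \ref{embed.w}, along a subsequence $u_k\cd u$ in $W^{1,G}_0(\Omega)$, $u_k\to u$ in $L^{H,w}(\Omega)$ and a.e., and, passing to a further subsequence, $\mu_k\to\mu\in[0,\Lambda]$; since $\Delta_2$ upgrades norm convergence in $L^{H,w}$ to modular convergence, $J(u)=\lim_k J(u_k)=\mu$.

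The crux is to promote $u_k\cd u$ to strong convergence of the gradients, and this is precisely where hypothesis \eqref{condp'} enters: it yields, through Lemma \ref{monot}, the monotonicity of the field $\xi\mapsto g(|\xi|)\xi/|\xi|$. Testing the equations with $v=u_k-u$ gives $\langle I'(u_k),u_k-u\rangle=\lam_k\langle J'(u_k),u_k-u\rangle$, whose right-hand side tends to $0$ because $\{\lam_k\}$ is bounded, $\{J'(u_k)\}$ is bounded in the dual of $L^{H,w}(\Omega)$ (by H\"older's inequality for Young functions together with $\Delta_2$), and $u_k-u\to0$ in $L^{H,w}(\Omega)$; combined with $\langle I'(u),u_k-u\rangle\to0$ (weak convergence), this yields
\[
\int_\Omega\Big(g(|\nabla u_k|)\tfrac{\nabla u_k}{|\nabla u_k|}-g(|\nabla u|)\tfrac{\nabla u}{|\nabla u|}\Big)\cdot(\nabla u_k-\nabla u)\,dx\longrightarrow 0 .
\]
By Lemma \ref{monot} the integrand is non-negative, so it tends to $0$ in $L^1(\Omega)$; the standard $(S_+)$-type argument for $g$-Laplacian operators (strict monotonicity off the diagonal plus the equi-integrability provided by $\Delta_2$) then forces $\nabla u_k\to\nabla u$ a.e., and in fact $u_k\to u$ strongly in $W^{1,G}_0(\Omega)$. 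Now the passage to the limit is immediate: for fixed $v$, continuity of $I'$ on $W^{1,G}_0(\Omega)$ and of $J'$ on $L^{H,w}(\Omega)$ give $\langle I'(u),v\rangle=\lam\langle J'(u),v\rangle$, so $\lam$ is an eigenvalue of \eqref{eigen} with eigenfunction $u$ at the level $J(u)=\mu\le\Lambda$.

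The remaining, and I expect genuinely delicate, point is to rule out $\mu=0$: since $w>0$, $\mu=0$ would force $u\equiv0$, which is not an admissible eigenfunction, so this case must be excluded to conclude $\lam\in\Sigma_\Lambda$. Here I would exploit the a priori lower bound $\lam_k\ge\tfrac{p^-}{q^+}\,I(u_k)/\mu_k\ge\tfrac{p^-}{q^+}\lam_{1,\mu_k}$ coming from the display above together with $u_k\in\A_{\mu_k}$: if $\mu_k\to0$ then, by the bound on $I(u_k)$, also $\|\nabla u_k\|_G\to0$, and feeding this into the Poincar\'e--Sobolev inequality of Proposition \ref{embed.w} and the sharp comparison between modulars and Luxemburg norms near the origin produces a lower bound for $I(u_k)/\mu_k$ which, combined with the monotonicity of $\mu\mapsto\mu\lam_{1,\mu}$ from Proposition \ref{eig.mono} and the elementary fact that $0\notin\Sigma_\Lambda$ (an eigenfunction of zero Dirichlet energy vanishes), is incompatible with $\lam_k\to\lam<\infty$; this pins $\mu$ away from $0$. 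Once $\mu>0$ is secured we have $u\in\A_\mu$ with $\mu\le\Lambda$, hence $\lam\in\Sigma_\Lambda$, and $\Sigma_\Lambda$ is closed. The lack of homogeneity of $\Delta_g$ is exactly what makes this last step nontrivial: for the $p$-Laplacian it is transparent by scaling, whereas here it requires the quantitative modular estimates just described.
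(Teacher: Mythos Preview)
Your argument is essentially correct and reaches the same conclusion, but the route you take to pass to the limit in the nonlinear gradient term differs from the paper's. You invoke the $(S_+)$ property: test with $v=u_k-u$, use Lemma~\ref{monot} to see the monotone integrand is nonnegative and tends to zero in $L^1$, then upgrade to $\nabla u_k\to\nabla u$ strongly in $W^{1,G}_0(\Omega)$, and finally pass to the limit by continuity of $I'$ and $J'$. The paper instead never proves strong convergence of gradients: it extracts a weak limit $g(|\nabla u_k|)\tfrac{\nabla u_k}{|\nabla u_k|}\rightharpoonup\eta$ in $(L^{\tilde G}(\Omega))^n$, passes to the limit in the weak formulation to get $\int_\Omega\eta\cdot\nabla v=\lam\int_\Omega wh(|u|)\tfrac{uv}{|u|}$, and then identifies $\eta$ with $g(|\nabla u|)\tfrac{\nabla u}{|\nabla u|}$ via the classical Minty--Browder trick (insert $r=u-tv$ into the monotonicity inequality and let $t\to0^\pm$). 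The Minty device is lighter: it uses only the bare monotonicity statement of Lemma~\ref{monot} and avoids the extra structure (strict monotonicity, equi-integrability arguments) that your $(S_+)$ step implicitly calls upon. Your route, on the other hand, yields more --- actual strong convergence --- which is not needed here but is sometimes useful downstream.

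On the point you single out as ``genuinely delicate'' --- ruling out $\mu=0$ --- you are right to flag it, and your instinct that $\mu_k\to0$ should force $\lam_k\to\infty$ is the natural one, but the sketch you give (Poincar\'e--Sobolev plus modular-versus-norm comparisons near the origin, combined with Proposition~\ref{eig.mono}) is not fully worked out and it is not clear it closes without further growth assumptions relating $G$ and $H$ near zero. For what it is worth, the paper's own proof does not address this point either: it simply asserts ``$\int_\Omega wH(|u|)\,dx=\mu$, and then $u\neq0$'' without arguing that $\mu>0$. So your proposal is at least as complete as the paper's on this issue, and more honest about where the difficulty lies.
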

\begin{proof}
First, since $G(t)$, $H(t)$ and $w(t)$ are positive functions for all $t>0$, it is clear that eigenvalues of \eqref{eigen} are positive.

Let $\lam_k\in \Sigma_\Lambda$ be such that $\lam_k\to \lam$ and let $u_k\in \mathcal{A}_{\mu_k}$, with $\mu_k \leq \Lambda$,  be an eigenfunction associated to $\lam_k$. Then, since for $k$ large enough, $\lam_k$ is bounded, by definition of eigenvalue and \eqref{condp} we get
\begin{align*}
\int_\Omega G(|\nabla u_k|)\,dx  &\leq \frac{1}{p^-} \int_\Omega g(|\nabla u_k|) |\nabla u_k|\,dx = \frac{\lam_k}{p^-} \int_\Omega w h(|u_k|)|u_k|\,dx\\
&\leq \lam_k \frac{p^+}{p^-} \int_\Omega w H(|u_k|)\,dx \leq \Lambda (1+\lam)\frac{p^+}{p^-}
\end{align*}
and hence $\{u_k\}_{k\in\N}$ is bounded in  $W^{1,G}_0(\Omega)$. Then, by using Proposition \ref{embed.w}, up to some subsequence if necessary, there exist $\mu\leq \Lambda$,  a function $u\in W^{1,G}_0(\Omega)$ and $\eta \in (L^{\widetilde G})^n$ such that
\begin{align*}
u_k\cd u &\text{ weakly in }W^{1,G}_0(\Omega),\\
u_k \to u &\text{ strongly in }L^{H,w}(\Omega) \text{ and a.e. in }\Omega,\\
g(|\nabla u_k|)\frac{\nabla u_k}{|\nabla u_k|}\cd \eta &\text{ weakly in }(L^{\widetilde G}(\Omega))^n.
\end{align*}
From this, $\int_\Omega wH(|u|)\,dx=\mu$, and then $u\neq 0$, and
\begin{equation} \label{eqnu}
\int_\Omega \eta \cdot \nabla v\,dx = \lam \int_\Omega w h(|u|)\frac{uv}{|u|}\,dx
\end{equation}
for every $v\in W^{1,G}_0(\Omega)$. As a consequence, the proof finishes if we show that
\begin{equation} \label{aprobar}
\int_\Omega \eta  \cdot  \nabla v\,dx = \int_\Omega g(|\nabla u|)\frac{\nabla u}{|\nabla u|}\cdot \nabla v\,dx
\end{equation}
for every $v\in W^{1,G}_0(\Omega)$. For this purpose we make use of the monotonicity of $g$ (Lemma \ref{monot}). For every $r\in W^{1,G}_0(\Omega)$, $u_k-r\in W^{1,G}_0(\Omega)$ 
and 
\begin{align*}
0&\leq \int_\Omega (g(|\nabla u_k|)\frac{\nabla u_k}{|\nabla u_k|}-g(|\nabla r|)\frac{\nabla r}{|\nabla r|})\cdot(\nabla u_k -\nabla r)\,dx\\
&=\lam_k \int_\Omega w h(|u_k|)\frac{u_k}{|u_k|}(u_k-r)\,dx - \int_\Omega g(|\nabla r|)\frac{\nabla r}{|\nabla r|}\cdot(\nabla u_k -\nabla r)\,dx.
\end{align*}
Taking limit $k\to\infty$ in the previous inequality, and using \eqref{eqnu}, we get
\begin{align*}
0&\leq \lam \int_\Omega w h(|u|)\frac{u}{|u|}(u-r)\,dx - \int_\Omega g(|\nabla r|)\frac{\nabla r}{|\nabla r|}\cdot(\nabla u -\nabla r)\,dx\\
&=\int_\Omega \eta \cdot (\nabla u-\nabla r)\,dx - \int_\Omega g(|\nabla r|)\frac{\nabla r}{|\nabla w|}\cdot(\nabla u -\nabla r)\,dx.
\end{align*}
So, if we take $r=u-tv$, with $v\in W^{1,G}_0(\Omega)$,  given $t> 0$, we immediately get
$$
0\leq \int_\Omega \left(\eta -  g(|\nabla u -t\nabla v|)\frac{\nabla u - t \nabla v}{|\nabla u - t\nabla v|} \right)\cdot \nabla v\,dx
$$
and taking $t\to 0^+$ we arrive at
$$
0\leq \int_\Omega \left(\eta -  g(|\nabla u|)\frac{\nabla u}{|\nabla u|} \right)\cdot \nabla v\,dx.
$$
In a analogous way, interchanging the roles of $u_k$ and $r$, it can be obtained the opposite inequality and then \eqref{aprobar} follows.
\end{proof}

\section{Lower bound of eigenvalues} \label{sec.teo}

Throughout this section $G$, $H$, $w$ and $\Omega$ satisfy the hypothesis introduced for \eqref{eigen}.

We  denote $\lam_1:=\lam_{1,1}$, that is, the eigenvalue \eqref{eigen.variac} with energy level $\mu=1$. Similarly, we denote $\A:=\A_{1}$.

In light of Corollary \ref{coro}, for any $\mu\geq 1$, we have that a lower bound for $\lam_1$ also bounds by below to $\mu \lam_{1,\mu}$. For this reason,  we focus on lower bounds for $\lam_1$.

\begin{prop} \label{prop1}
Let $\lam_1$ be the first eigenvalue of \eqref{eigen} with eigenfunction $u\in \A$, then
$$
\|   \nabla u   \|_G \leq \frac{\kappa_0}{G^{-1}(\lam_1^{-1}) } \|u\|_{H,w},
$$
where $\kappa_0=C^{p^+}$ is a positive constant.
\end{prop}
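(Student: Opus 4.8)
The plan is to test the eigenvalue equation with the eigenfunction $u$ itself and then convert the resulting modular identity into the desired norm inequality using the $\Delta'$ condition together with Lemma~\ref{equiv}. Since $u\in\A$ is an eigenfunction for $\lam_1$, taking $v=u$ in the definition of eigenvalue gives
\begin{equation*}
\int_\Omega g(|\nabla u|)|\nabla u|\,dx = \lam_1 \int_\Omega w\, h(|u|)|u|\,dx.
\end{equation*}
Using \eqref{condp} on both sides, namely $G(t)\le t g(t)/p^- \le \frac{p^+}{p^-}G(t)$ applied with $t=|\nabla u|$ on the left and $t=|u|$ on the right (and recalling $\int_\Omega w H(|u|)\,dx = 1$ since $u\in\A=\A_1$), we obtain a two-sided control of the form $\int_\Omega G(|\nabla u|)\,dx \le \lam_1 \frac{p^+}{p^-}$, hence $\Phi_G(|\nabla u|)\lesssim \lam_1$. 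The more delicate direction is to extract from this a bound on the Luxemburg norm $\|\nabla u\|_G$ that exhibits the factor $G^{-1}(\lam_1^{-1})^{-1}$ and the normalization $\|u\|_{H,w}$.

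The key step is a rescaling argument exploiting \eqref{cond}. Set $\delta := \|u\|_{H,w}$ so that, by Lemma~\ref{equiv}(iii), $\int_\Omega w\, H(|u|/\delta)\,dx = 1$ (after normalizing; one works with $u/\|u\|_{H,w}$ throughout). Repeating the eigenvalue identity for the rescaled function and applying \eqref{condp}, one arrives at $\Phi_G(|\nabla u|/\delta) \le \frac{p^+}{p^-}\lam_1$, i.e. a bound of the shape $\Phi_G(|\nabla u|/\delta) \le c_0 \lam_1$ with $c_0$ depending only on $p^\pm$. Now choose the scaling parameter $\eta := \delta / G^{-1}(\lam_1^{-1})$ raised appropriately; using $G(ab)\le C\,G(a)G(b)$ with $a = |\nabla u|/\delta$ and $b$ chosen so that $G(b)\approx \lam_1^{-1}/c_0$, i.e. $b = G^{-1}(\lam_1^{-1})$-type, one gets
\begin{equation*}
\int_\Omega G\!\left( \frac{|\nabla u|}{\delta}\, G^{-1}(\lam_1^{-1}) \right) dx \le C\, G\big(G^{-1}(\lam_1^{-1})\big)\int_\Omega G\!\left( \frac{|\nabla u|}{\delta}\right) dx = C\,\lam_1^{-1}\,\Phi_G(|\nabla u|/\delta) \le C c_0 .
\end{equation*}
Absorbing the constant $Cc_0$ into a dilation by a further factor — here one uses that $G$ satisfies $\Delta_2$ so that $G(\kappa_0^{-1} t)$ can be made to swallow a fixed multiplicative constant inside the modular, and a careful bookkeeping of the $\Delta'$ constant $C$ and the exponents $p^\pm$ yields exactly $\kappa_0 = C^{p^+}$ — one concludes $\Phi_G\big( \tfrac{1}{\kappa_0}\, G^{-1}(\lam_1^{-1})\, |\nabla u|/\delta \big) \le 1$. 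By Lemma~\ref{equiv}(i) this is equivalent to
\begin{equation*}
\left\| \frac{G^{-1}(\lam_1^{-1})}{\kappa_0\,\delta}\, \nabla u \right\|_G \le 1,
\end{equation*}
which is precisely $\|\nabla u\|_G \le \dfrac{\kappa_0}{G^{-1}(\lam_1^{-1})}\,\|u\|_{H,w}$ after recalling $\delta = \|u\|_{H,w}$.

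The main obstacle I anticipate is the precise constant tracking: turning the chain of $\Delta'$ and $\Delta_2$ applications into the clean value $\kappa_0 = C^{p^+}$ requires being careful about whether one applies \eqref{cond} directly or in its inverse form \eqref{cond1}, and about how many times one invokes the doubling inequality $G(2t)\le 2^{p^+}G(t)$ to absorb the leftover multiplicative constant $Cc_0$ inside the modular. A secondary subtlety is that the eigenvalue identity, when tested with $u$, produces $\int_\Omega w\,h(|u|)|u|\,dx$ rather than $\int_\Omega w H(|u|)\,dx$; one needs \eqref{condp} to pass between these, and this is where the dependence on $p^\pm$ (hidden here inside the claim that $\kappa_0$ is "a positive constant") enters. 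Everything else — the Fréchet differentiability, the admissibility of $u$ as a test function, the equivalence of modular and norm sublevel sets — is already supplied by the preliminaries.
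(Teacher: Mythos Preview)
Your core mechanism is correct and is exactly the paper's: use the $\Delta'$ condition \eqref{cond} to pull the factor $G^{-1}(\lam_1^{-1})$ inside the modular, absorb the leftover constant by a dilation via \eqref{condp}, and then invoke Lemma~\ref{equiv} to pass from $\Phi_G\le 1$ to a Luxemburg-norm bound. Two points where you overcomplicate things, though. First, there is no need to test the weak formulation with $v=u$ and then convert $g,h$ back to $G,H$ through \eqref{condp}; since $u\in\A$ is a minimizer of the variational quotient, Remark~\ref{rem.autov} gives directly $\int_\Omega G(|\nabla u|)\,dx=\lam_1\int_\Omega wH(|u|)\,dx=\lam_1$, with no spurious $p^+/p^-$ factor, and the paper proceeds from this identity. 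Second, your rescaling step by $\delta=\|u\|_{H,w}$ is both unnecessary and, as written, not justified: the hypothesis $u\in\A$ means $\Phi_{H,w}(u)=1$, so Lemma~\ref{equiv}(iii) already gives $\delta=1$; moreover the problem is not homogeneous, so $u/\delta$ is not in general an eigenfunction and one cannot ``repeat the eigenvalue identity for the rescaled function''. The paper instead first proves $\|\nabla u\|_G\le \kappa_0/G^{-1}(\lam_1^{-1})$ and then simply multiplies both sides by $\|u\|_{H,w}=1$ at the very end.
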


\begin{proof}
Since $\lam_1$ is an eigenvalue of \eqref{eigen} with eigenfunction $u\in \A$ we get
$$
\frac{1}{\lam_1}\int_\Omega G(|\nabla u|)\,dx=\int_\Omega wH(|u|)\,dx =1,
$$
from where, using \eqref{cond} and \eqref{condp} it follows that
\begin{align*}
  1&=\frac{1}{\lam_1}\int_\Omega G(|\nabla u|)\,dx= G\left(G^{-1}(\lam_1^{-1})\right) \int_\Omega G(|\nabla u|)\,dx\\
  &\geq \frac{1}{C} \int_\Omega G \left(G^{-1}(\lam_1^{-1}) |\nabla u| \right)\,dx\\
  &\geq  \int_\Omega G \left( \kappa_0^{-1} G^{-1}(\lam_1^{-1}) |\nabla u| \right)\,dx
\end{align*}
where $\kappa_0=C^{p^+}$.

Therefore, the last inequality together with Lemma \ref{equiv} yields
$$
\| \kappa_0^{-1} G^{-1}(\lam_1^{-1}) |\nabla u|  \|_G \leq 1,
$$
and since the norm is $1-$homogeneous
$$
\|  \nabla u   \|_G \leq \frac{\kappa_0}{ G^{-1}(\lam_1^{-1}) }.
$$

Again, the normalization $\Phi_{H,w}(u)=1$ implies $\|u\|_{H,w}=1$ due to Lemma \ref{equiv},  and then
$$
\|   \nabla u   \|_G \leq \frac{\kappa_0}{ G^{-1}(\lam_1^{-1}) } \|u\|_{H,w}
$$
which concludes the proof.
\end{proof}

\begin{thm} \label{teo.princ.1}
Let $G$ and $H$ be Young functions  satisfying \eqref{cond} and let $\lam_1$ be the first eigenvalue of \eqref{eigen}. Assume that $T_g=\infty$ and that $H\prec G \prec\prec G^*$.  

Denote the Young function $A(t):=G^*\circ G^{-1}(t)$, being $G^*$ the critical exponent in the Orlicz-Sobolev embedding. Then,

\begin{itemize}
\item [(i)] if $w\in L^\infty(\Omega)$,  there exists a positive constant $c_1$ such that
\begin{equation} \label{t.peso.0}
\left[ G\left( \frac{c_1}{ G^{-1}\left(\frac{1}{\tau_A(\Omega)\|w\|_{L^\infty(\Omega)} }\right)   }  \right) \right]^{-1}\leq \lam_1 
\end{equation}
where 
$\tau_A(t)=|\Omega| A^{-1}(|\Omega|^{-1})$, $t>0$;

\item[(ii)] if $w\in L^{\tilde B}(\Omega)$ with $B$ a Young function such that $B\prec A$, there exists a positive constant $\hat c_1$ such that
\begin{equation} \label{t.peso.1}
\left[ G\left( \frac{\hat c_1}{ G^{-1}\left(\|w\|^{-1}_{\tilde B}\right)   }  \right) \right]^{-1} \leq \lam_1.
\end{equation}
\end{itemize}
\end{thm}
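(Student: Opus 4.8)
\textbf{Proof proposal for Theorem \ref{teo.princ.1}.}

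The plan is to combine Proposition \ref{prop1}, which controls $\|\nabla u\|_G$ by $\|u\|_{H,w}$ up to the factor $G^{-1}(\lam_1^{-1})^{-1}$, with the weighted embedding estimates $\|u\|_{H,w}\leq \kappa\|\nabla u\|_G$ coming from (the proof of) Proposition \ref{embed.w}, so that the two inequalities can be chained. First I would take the first eigenfunction $u\in\A$, so that $\Phi_{H,w}(u)=1$ and hence $\|u\|_{H,w}=1$ by Lemma \ref{equiv}. Proposition \ref{prop1} then gives
$$
\|\nabla u\|_G\leq \frac{\kappa_0}{G^{-1}(\lam_1^{-1})}.
$$
On the other hand, since $H\prec G\prec\prec G^*$ and $T_g=\infty$, the chain of estimates in the proof of Proposition \ref{embed.w} yields a constant $\kappa_1$, depending only on $n,p^\pm$ and the $\Delta'$-constant $C$, together with the relevant norm of $w$, such that $\|u\|_{H,w}\leq \kappa_1\|\nabla u\|_G$; the point is to read off the explicit dependence of $\kappa_1$ on $w$ from that proof.

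For part (i), with $w\in L^\infty(\Omega)$, I would not use the crude bound $\|u\|_{H,w}\le \|w\|_\infty^{1/\ldots}\|u\|_H$ directly, but rather route through $\|u\|_{G^*}$ as in the second half of the proof of Proposition \ref{embed.w}: since $G\prec G^*$ and (by $H\prec G$) $H\prec G^*$, one gets $\|u\|_{H,w}\leq \delta$ with
$$
\delta=\frac{ck\|u\|_{G^*}}{G^{-1}\!\left(\tfrac{1}{2C\|w\|_{\tilde A}}\right)},
$$
where $A=G^*\circ G^{-1}$ and $\|w\|_{\tilde A}$ is the Luxemburg norm in $L^{\tilde A}(\Omega)$. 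When $w\in L^\infty(\Omega)$ one has $\|w\|_{\tilde A}\leq \|w\|_\infty\,\|1\|_{\tilde A}=\|w\|_\infty\,\tau_A(\Omega)$ by the explicit value $\|\chi_\Omega\|_{\tilde A}=|\Omega|\,(\tilde{\tilde A})^{-1}(|\Omega|^{-1})=|\Omega|A^{-1}(|\Omega|^{-1})=\tau_A(\Omega)$ (using $\tilde{\tilde A}=A$). Combining with the Sobolev inequality $\|u\|_{G^*}\leq \kappa\|\nabla u\|_G$ from Proposition \ref{embed} and with Proposition \ref{prop1},
$$
1=\|u\|_{H,w}\leq \frac{ck\kappa}{G^{-1}\!\left(\tfrac{1}{2C\tau_A(\Omega)\|w\|_\infty}\right)}\cdot\frac{\kappa_0}{G^{-1}(\lam_1^{-1})}.
$$
Since $G^{-1}$ is increasing, rearranging gives $G^{-1}(\lam_1^{-1})\geq c_1/G^{-1}\!\bigl((\tau_A(\Omega)\|w\|_\infty)^{-1}\bigr)$ up to adjusting constants via the $\Delta'$-property \eqref{cond1} to absorb the factor $2C$ into $c_1$; applying $G$ to both sides and inverting yields \eqref{t.peso.0}. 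Part (ii) is the same computation with $\|w\|_{\tilde A}$ replaced by $\|w\|_{\tilde B}$ and the observation, noted in the proof of Proposition \ref{embed.w}, that $B\prec A$ makes the argument go through verbatim (one uses $L^{\tilde A}(\Omega)\subset L^{\tilde B}(\Omega)$, equivalently $\tilde B\prec\tilde A$), producing the constant $\hat c_1$ and \eqref{t.peso.1}.

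The main obstacle I anticipate is bookkeeping rather than conceptual: tracking how the constant produced by the embedding depends precisely on $\|w\|_{L^\infty}$ versus on $\tau_A(\Omega)\|w\|_{L^\infty}$, and making sure the $\Delta'$ inequality \eqref{cond1} is invoked in the right direction so that nested compositions like $G^{-1}\bigl((2C\tau_A(\Omega)\|w\|_\infty)^{-1}\bigr)$ can be compared with $G^{-1}\bigl((\tau_A(\Omega)\|w\|_\infty)^{-1}\bigr)$ at the cost of only a multiplicative constant. A secondary subtlety is justifying $\tilde{\tilde A}=A$ and the identity $\|\chi_\Omega\|_{\tilde A}=\tau_A(\Omega)$, which is a standard Luxemburg-norm computation for the indicator of a set of finite measure but should be stated cleanly. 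Once these constants are pinned down, the inequalities chain together mechanically and the theorem follows.
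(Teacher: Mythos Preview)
Your approach is essentially the paper's: both chain Proposition~\ref{prop1}, the Sobolev embedding $\|u\|_{G^*}\le\kappa\|\nabla u\|_G$, and the key estimate $\|G(|u|)\|_A\le G(c\|u\|_{G^*})$ (the paper re-derives this inline, you quote it from the proof of Proposition~\ref{embed.w}); for part (i) the paper pulls out $\|w\|_\infty$ first and then applies H\"older with the pair $(A,\tilde A)$ to $G(|u|)\cdot\chi_\Omega$, while you apply H\"older to $G(|u|)\cdot w$ and then bound $\|w\|_{\tilde A}\le\|w\|_\infty\,\tau_A(\Omega)$, which is the same computation rearranged. One small slip in part (ii): from $B\prec A$ one gets $\tilde A\prec\tilde B$, hence $L^{\tilde B}(\Omega)\subset L^{\tilde A}(\Omega)$ (not the reverse as you wrote); this is precisely the inclusion needed, since it gives $\|w\|_{\tilde A}\lesssim\|w\|_{\tilde B}$ and the rest of your argument goes through.
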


\begin{proof}
First, observe that since $A\circ G=G^*$ is a Young function, by \cite{KR}[Chapter I.1.2], $A$ is also a Young function.  Then, the H\"older's inequality for Young functions yields
\begin{equation} \label{thm.eq0}
1=\int_\Omega wH(|u|)\,dx \leq \hat k\int_\Omega wG(|u|)\,dx \leq  2\hat k  \|\chi_\Omega\|_{\tilde A} \|G(|u|)\|_A \|w\|_\infty
\end{equation}
since  $w$ is positive and $H\prec G$, for some fixed $\hat k>0$.

The first factor in the last expression can be estimated by using \cite[Example 3.6.9]{CFK} and \cite[Theorem 3.8.5]{CFK}:
\begin{equation} \label{thm.eq1}
\|\chi_\Omega\|_{\tilde A} \leq |\Omega| A^{-1}(|\Omega|^{-1}):=\tau_A(\Omega).
\end{equation}

On the other hand, observe that by using \eqref{cond1}
\begin{align*}
G^{-1}\left( \frac{G(|u|)}{ G(c\|u\|_{G^*})} \right) &= 
\frac{G^{-1}(G(c\|u\|_{G^*}))}{ c\|u\|_{G^*} } G^{-1}\left( \frac{G(|u|)}{ G(c\|u\|_{G^*})} \right)\\
&\leq
 \frac{1}{ \|u\|_{G^*} } G^{-1}\left(G(c\|u\|_{G^*})  \frac{G(|u|)}{G(c\|u\|_{G^*})}  \right) =
 \frac{|u|}{ \|u\|_{G^*} },
\end{align*}
which, in light of the definition of Luxemburg norm, gives
\begin{align*}
\int_\Omega A\left( \frac{G(|u|)}{G(c\|u\|_{G^*})} \right)\,dx &= 
\int_\Omega G^*\circ G^{-1}\left( \frac{G(|u|)}{ G(c\|u\|_{G^*})} \right)\,dx \leq 
\int_\Omega G^*\left(  \frac{|u|}{ \|u\|_{G^*} }  \right) \,dx=1
\end{align*}
since $G^*$ is increasing, which, again by definition of the norm,  leads to
$$
 \|G(|u|) \|_A \leq G(c\|u\|_{G^*}).
$$
This, together with  Proposition \ref{prop1} and the embedding of $W^{1,G}_0(\Omega)$ into $L^{G^*}(\Omega)$ given in Proposition \ref{embed.w} allows us to deduce that
\begin{align} \label{thm.eq2}
\begin{split}
\|G(|u|)\|_A &\leq G(c\| u \|_{G^*}) \leq G(c\kappa \|\nabla u\|_G) \leq G\left(
\frac{c\kappa \kappa_0}{G^{-1}(\lam^{-1}) } \|u\|_{H,w} \right)\\
&\leq G\left(
\frac{c\kappa \kappa_0}{G^{-1}(\lam^{-1}) }  \right),
\end{split}
\end{align}
where we have used Lemma \ref{equiv} for  $u\in \mathcal{A}$.

Finally, gathering \eqref{thm.eq0}, \eqref{thm.eq1} and \eqref{thm.eq2} we get
$$
\frac{1}{2\hat k \|w\|_\infty \tau_A(\Omega)}\leq G\left(
\frac{c\kappa\kappa_0}{G^{-1}(\lam^{-1}) }  \right),
$$
from where, using that $G^{-1}$ is an increasing function we arrive at
$$
G^{-1}\left( \frac{1}{2\hat k \|w\|_\infty \tau_A(\Omega)}\right) \leq 
\frac{c\kappa\kappa_0}{G^{-1}(\lam^{-1}) } 
$$
and \eqref{t.peso.0} follows.

On the other hand, if $w\in L^{\tilde B}(\Omega)$ with $B\prec A$, the H\"older's inequality for Young functions yields
\begin{equation*} 
1=\int_\Omega wH(|u|)\,dx \leq \hat k\int_\Omega wG(|u|)\,dx \leq  2 \hat k \|w\|_{\tilde B} \|G(|u|)\|_A,
\end{equation*}
since  $w$ is positive and $H\prec G$, for some fixed $\hat k>0$, and then, using \eqref{thm.eq2}, is obtained  \eqref{t.peso.1}.
\end{proof}

\begin{thm} \label{p.hardy}
Let $G$ and $H$ be Young functions  satisfying \eqref{cond} ordered as
\begin{equation} \label{cond.hardy}
t\int_{t_0}^t \frac{H(s)}{s^2}\,ds \leq G(kt)\quad \text{ for } t\geq t_0,
\end{equation}
for some constant $k>0$ and $t_0\geq 0$. Let $\lam_1$ be the first eigenvalue of \eqref{eigen} and $w\in L^\infty(\Omega)$, then
$$
\left[ G\left( \frac{ c_2 r_\Omega}{ H^{-1}(\|w\|_{L^\infty(\Omega)}^{-1})}\right) \right]^{-1} \leq \lam_1,
$$
where $c_2$ is a positive constant and $r_\Omega$ denotes the inner radius of $\Omega$.
\end{thm}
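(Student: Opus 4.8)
The plan is to feed the a priori bound of Proposition \ref{prop1} into a Hardy-type inequality which converts the vanishing of the eigenfunction on $\partial\Omega$ into a lower bound for $\|\nabla u\|_G$ controlled by the inradius. Let $u:=u_{1,1}\in\A$ be a positive eigenfunction associated to $\lam_1$ (Proposition \ref{prop.lam1}); since $\Phi_{H,w}(u)=1$ we have $\|u\|_{H,w}=1$ by Lemma \ref{equiv}, so Proposition \ref{prop1} reads $\|\nabla u\|_G\le \kappa_0/G^{-1}(\lam_1^{-1})$. Hence it suffices to produce a matching lower bound $\|\nabla u\|_G\ge c\,r_\Omega^{-1}\,H^{-1}(\|w\|_{L^\infty(\Omega)}^{-1})$ for some constant $c$: combining the two and using that $G$ and $G^{-1}$ are increasing gives $G^{-1}(\lam_1^{-1})\le c_2\,r_\Omega/H^{-1}(\|w\|_{L^\infty(\Omega)}^{-1})$ with $c_2=\kappa_0/c$, which is exactly the claimed estimate after applying $G$ and taking reciprocals.

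To get that lower bound I would estimate the Luxemburg norm $\|u\|_H$ on both sides. From below: the computation in the proof of Proposition \ref{embed.w} (case $w\in L^\infty$) yields $\|u\|_{H,w}\le \|u\|_H/H^{-1}(C^{-1}\|w\|_{L^\infty(\Omega)}^{-1})$, so $\|u\|_{H,w}=1$ forces $\|u\|_H\ge H^{-1}(C^{-1}\|w\|_{L^\infty(\Omega)}^{-1})$, and \eqref{cond1} applied to $H$ turns this into $\|u\|_H\ge c^{-1}H^{-1}(C^{-1})\,H^{-1}(\|w\|_{L^\infty(\Omega)}^{-1})$. From above: setting $d(x):=\dist(x,\partial\Omega)$, the sharp Hardy--Orlicz inequality on the bounded Lipschitz domain $\Omega$ has the form $\|u/d\|_H\le C\|\nabla u\|_{\hat H}$, and the embedding $L^G(\Omega)\subset L^{\hat H}(\Omega)$ — which is precisely what \eqref{cond.hardy} encodes via the criterion $\hat H\prec G$ — upgrades the right-hand side to $\|\nabla u\|_G$, giving $\|u/d\|_H\le C_1\|\nabla u\|_G$. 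Since $d\le r_\Omega$ on $\Omega$, monotonicity and $1$-homogeneity of the Luxemburg norm give $\|u\|_H=r_\Omega\|u/r_\Omega\|_H\le r_\Omega\|u/d\|_H\le C_1 r_\Omega\|\nabla u\|_G$. Chaining the two estimates for $\|u\|_H$ produces the required lower bound for $\|\nabla u\|_G$, with $c_2$ an explicit product of $\kappa_0$, $C_1$, $c$ and $H^{-1}(C^{-1})$. For general $\mu\ge1$ one then invokes Corollary \ref{coro}.

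The crux — and the step I expect to be the main obstacle — is the Hardy--Orlicz inequality $\|u/d\|_{L^H(\Omega)}\le C\|\nabla u\|_{L^{\hat H}(\Omega)}$ on a bounded Lipschitz domain, together with the clean dependence of $C$ on the Lipschitz character only. I would prove it by flattening $\partial\Omega$ in finitely many Lipschitz charts, in which $d(x)$ is comparable to the distance in the normal variable; by Fubini the estimate reduces to the one-dimensional Hardy inequality $\|x\mapsto\frac1x\int_0^x f\|_{L^H(0,\ell)}\le C\|f\|_{L^{\hat H}(0,\ell)}$, the classical sharp Hardy--Orlicz estimate, in which $\hat H(t)=t\int_{t_0}^t H(s)s^{-2}\,ds$ is exactly the optimal companion of $H$ (for $H=t^p$ this is $\|Tf\|_p\le\tfrac{p}{p-1}\|f\|_p$, which fails at $p=1$, matching the failure of $\hat H\prec G$ there). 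Hypothesis \eqref{cond.hardy} is then used, through $\hat H\prec G\iff L^G(\Omega)\subset L^{\hat H}(\Omega)$, to replace $\|f\|_{L^{\hat H}}$ by $C\|f\|_{L^G}$; the $\Delta_2$ property of $G$ and $\widetilde G$ from \eqref{condp} and the $\Delta'$ condition \eqref{cond} are used to patch the charts and to absorb the interior part and the contribution of the threshold $t_0$, so that $C_1=C_1(n,\Omega,G,H)$. As a consistency check, for $G(t)=H(t)=t^p$ one has $\hat H(t)\approx t^p/(p-1)$, \eqref{cond.hardy} holds for every $p>1$, and the scheme yields $\lam_1\ge \bar c_2\,(r_\Omega^{\,p}\,\|w\|_{L^\infty(\Omega)})^{-1}$, in agreement with the Remark following the statement and with \cite{AL,C,DNP}.
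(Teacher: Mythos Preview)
Your argument is correct and follows the same skeleton as the paper: Proposition~\ref{prop1} for the upper bound on $\|\nabla u\|_G$, the comparison $\|u\|_{H,w}\le \|u\|_H/H^{-1}(C^{-1}\|w\|_\infty^{-1})$ from the $\Delta'$ condition, a Hardy--Orlicz inequality combined with $d\le r_\Omega$ for the lower bound, and then rearrangement. The only real divergence is in how the Hardy step is handled. The paper does not prove it: it simply invokes \cite[Theorem~1]{Cia}, which under hypothesis~\eqref{cond.hardy} directly yields $\|u/d\|_H\le c_H\|\nabla u\|_G$ in one stroke, with the growth condition $\hat H\prec G$ already built into the statement. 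You instead propose to establish the intermediate inequality $\|u/d\|_H\le C\|\nabla u\|_{\hat H}$ by boundary flattening and reduction to the one-dimensional Hardy averaging inequality, and then apply the embedding $L^G\hookrightarrow L^{\hat H}$ separately. That route is sound and makes the role of $\hat H$ transparent, but it is exactly what Cianchi's theorem packages; citing it spares you the chart-by-chart patching and the handling of the interior region and the threshold $t_0$, which are routine but tedious.
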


\begin{proof}
First, observe that taking $\delta=\|u\|_H/H^{-1}(C^{-1}\|w\|_{L^\infty(\Omega)}^{-1})$ in the definition of the Luxemburg norm together with \eqref{cond}  yields
\begin{align*}
\int_\Omega H\left( \frac{u}{\delta} \right) w(x)\,dx &\leq  
\int_\Omega H\left( \frac{|u|}{\|u\|_H} H^{-1}(C^{-1}\|w\|_{L^\infty(\Omega)}^{-1})\right) \|w\|_{L^\infty(\Omega)} \,dx\\  &\leq 
\int_\Omega H\left( \frac{|u|}{\|u\|_H}\right) \,dx =1,
\end{align*}
from where, again by definition of the Luxemburg norm, we get
\begin{equation} \label{des.h.1}
\|u\|_{H,w}\leq \frac{\|u\|_H}{H^{-1}(C^{-1}\|w\|_{L^\infty(\Omega)}^{-1})}.
\end{equation}

Now, in light of condition \eqref{cond.hardy},  by the Hardy inequality \cite[Theorem 1]{Cia} there exists a positive constant $c_H$ such that
$$
\Big\|\frac{u}{d(x,\partial \Omega)} \Big\|_H \leq c_H \|\nabla u\|_G.
$$
Since for any $x\in \Omega$ we have that $d(x,\partial \Omega)\leq r_\Omega$,  from the last expression we get
$$
\frac{\|u\|_H}{r_\Omega} \leq \Big\|\frac{u}{d(x,\partial\Omega)} \Big\|_H \leq c_H \|\nabla u\|_G \leq \frac{c_H \kappa_0}{G^{-1}(\lam_1^{-1}) } \|u\|_{H,w}
$$
where in the last inequality we have used Proposition \ref{prop1} and the $1-$homogeneity of the Luxemburg norm.

From the last inequality and \eqref{des.h.1}, and rearranging terms,  we   deduce that
$$
G^{-1}(\lam_1^{-1}) \leq \frac{c_H \kappa_0 r_\Omega}{H^{-1}(C^{-1}\|w\|_{L^\infty(\Omega)}^{-1})},
$$
and the result follows just by applying $G$ to both sides of the inequality.
\end{proof}

\begin{thm} \label{p.bigger.n}
Let $G$ and $H$ be Young functions  satisfying \eqref{cond} and assume that $T_g<\infty$. Let $\lam_1$ be the first eigenvalue of \eqref{eigen} and $w\in L^\infty(\Omega)$, then there are positive constants $c_3$ and $\hat c_3$ such that
\begin{equation} \label{p.bigger.n.1}
\left[G\left(  \frac{ c_3\sigma(r_\Omega)}{H^{-1}(\|w\|_{L^1(\Omega)}^{-1})}\right)\right]^{-1} \leq \lam_1
\end{equation}
and
\begin{equation} \label{p.bigger.n.2}
\left[G\left(\frac{\hat c_3\sigma(r_\Omega) \tau_H(\Omega)}{H^{-1}(\|w\|_{L^\infty(\Omega)}^{-1})}\right) \right]^{-1} \leq \lam_1
\end{equation}
where $\tau_H(\Omega):=|\Omega| (\tilde H)^{-1}(|\Omega|^{-1})$,  $r_\Omega$ denotes the inner radius of $\Omega$ and $\sigma$ is the modulus of continuity given in \eqref{modulo}.
\end{thm}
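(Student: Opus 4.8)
The plan is to combine the H\"older continuity of eigenfunctions---available precisely because $T_g<\infty$---with the modular estimate of Proposition \ref{prop1}, after which the two inequalities \eqref{p.bigger.n.1} and \eqref{p.bigger.n.2} follow from elementary manipulations with Young functions. Fix the first eigenfunction $u\in\A$, so that $\Phi_{H,w}(u)=1$ and hence $\|u\|_{H,w}=1$ by Lemma \ref{equiv}. Since $T_g<\infty$, Proposition \ref{embed} provides a continuous representative of $u$ on $\overline\Omega$ satisfying $|u(x)-u(y)|\le\kappa\|\nabla u\|_G\,\sigma(|x-y|)$; as $u\in W^{1,G}_0(\Omega)$ this representative vanishes on $\partial\Omega$, so for each $x\in\Omega$ one chooses $y\in\partial\Omega$ realizing $\dist(x,\partial\Omega)$ and uses that $\sigma$ is increasing and $\dist(x,\partial\Omega)\le r_\Omega$ to obtain $\|u\|_{L^\infty(\Omega)}\le\kappa\,\sigma(r_\Omega)\,\|\nabla u\|_G$. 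Plugging in Proposition \ref{prop1} together with $\|u\|_{H,w}=1$ yields the upper bound $\|u\|_{L^\infty(\Omega)}\le \kappa\kappa_0\,\sigma(r_\Omega)/G^{-1}(\lam_1^{-1})$.

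The next step is a lower bound for $\|u\|_{L^\infty(\Omega)}$ coming from the normalization. Since $H$ is increasing and $w\ge 0$, $1=\int_\Omega wH(|u|)\,dx\le H(\|u\|_{L^\infty(\Omega)})\,\|w\|_{L^1(\Omega)}$, so $\|u\|_{L^\infty(\Omega)}\ge H^{-1}(\|w\|_{L^1(\Omega)}^{-1})$. Chaining this with the upper bound, isolating $G^{-1}(\lam_1^{-1})$, and applying the increasing maps $G$ and $t\mapsto t^{-1}$ gives \eqref{p.bigger.n.1} with $c_3=\kappa\kappa_0$. For \eqref{p.bigger.n.2} one instead uses $\|w\|_{L^1(\Omega)}\le\|w\|_{L^\infty(\Omega)}|\Omega|$, so that $\|u\|_{L^\infty(\Omega)}\ge H^{-1}\big((\|w\|_{L^\infty(\Omega)}|\Omega|)^{-1}\big)$, and then splits the argument of $H^{-1}$ by the $\Delta'$ consequence \eqref{cond1} applied to $H$, namely $H^{-1}(st)\ge c^{-1}H^{-1}(s)H^{-1}(t)$, with $s=\|w\|_{L^\infty(\Omega)}^{-1}$ and $t=|\Omega|^{-1}$. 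The Young--conjugate inequality $|\Omega|^{-1}\le H^{-1}(|\Omega|^{-1})(\tilde H)^{-1}(|\Omega|^{-1})$ then reads $H^{-1}(|\Omega|^{-1})\ge \tau_H(\Omega)^{-1}$, since $\tau_H(\Omega)=|\Omega|(\tilde H)^{-1}(|\Omega|^{-1})$. Hence $\|u\|_{L^\infty(\Omega)}\ge H^{-1}(\|w\|_{L^\infty(\Omega)}^{-1})/(c\,\tau_H(\Omega))$, and chaining with the upper bound and inverting as before yields \eqref{p.bigger.n.2} with $\hat c_3=c\kappa\kappa_0$.

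I expect the only step requiring real care to be the emergence of the factor $\tau_H(\Omega)$ in part (ii): one cannot simply feed $\|w\|_{L^1(\Omega)}\le\|w\|_{L^\infty(\Omega)}|\Omega|$ into part (i), because $|\Omega|$ would remain trapped inside $H^{-1}$ and $\tau_H(\Omega)$ can be arbitrarily small; the supermultiplicativity of $H^{-1}$ furnished by the $\Delta'$ condition, together with the sharp bound $\|\chi_\Omega\|_H\le\tau_H(\Omega)$ (equivalently, the Young--conjugate inequality above), is exactly what closes this gap. A minor technical point is to apply the H\"older estimate of Proposition \ref{embed}, stated for $W^{1,G}_0(\Omega)$, to the continuous representative of $u$ and to note that monotonicity of $\sigma$ legitimizes replacing $\dist(x,\partial\Omega)$ by $r_\Omega$.
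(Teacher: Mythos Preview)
Your argument is correct and follows the same overall strategy as the paper: bound $\|u\|_{L^\infty(\Omega)}$ from above via the H\"older estimate \eqref{des1} combined with Proposition~\ref{prop1}, bound it from below via the normalization $\Phi_{H,w}(u)=1$, and chain the two.

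There are two minor organizational differences worth noting. For part (i), your route is actually cleaner than the paper's: you read off $\|u\|_{L^\infty(\Omega)}\ge H^{-1}(\|w\|_{L^1(\Omega)}^{-1})$ directly from $1\le H(\|u\|_{L^\infty})\|w\|_{L^1}$, which avoids the use of \eqref{cond1} that the paper invokes in \eqref{dess3}; this yields the slightly better constant $c_3=\kappa\kappa_0$ instead of $c\kappa\kappa_0$. For part (ii), the paper instead passes through the unweighted norm via $\|u\|_{H,w}\le \|u\|_H/H^{-1}(C^{-1}\|w\|_{L^\infty}^{-1})$ and then bounds $\|u\|_H\le u(x_0)\|\chi_\Omega\|_H\le u(x_0)\,\tau_H(\Omega)$, whereas you feed $\|w\|_{L^1}\le\|w\|_{L^\infty}|\Omega|$ into part (i) and split with \eqref{cond1} and the inequality $t\le H^{-1}(t)(\tilde H)^{-1}(t)$. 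These are equivalent manipulations---the same Young--conjugate inequality underlies both the bound on $\|\chi_\Omega\|_H$ and your lower bound on $H^{-1}(|\Omega|^{-1})$---and lead to the same constant $\hat c_3$ up to harmless factors.
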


\begin{proof}
Let $u\in \mathcal{A}$ be an eigenfunction corresponding to $\lam_1$. By Proposition \ref{embed.w}, $u\in C^{0,\sigma(t)}(\overline \Omega)$. Since $u>0$ in $\Omega$ and $u=0$ on $\partial\Omega$, there exists $x_0\in \Omega$ such that $u(x_0)=\|u\|_{L^\infty(\overline \Omega)}$. Therefore, for any $y\in \partial\Omega$ we have that
$$
u(x_0)\leq \kappa \sigma(|x_0-y|) \| \nabla u\|_G
$$
which yields 
$$
u(x_0)\leq \kappa \sigma(r_\Omega) \| \nabla u\|_G.
$$
Gathering the last relation together with Proposition \ref{prop1} one gets that
\begin{equation} \label{estimacion}
u(x_0)  \leq  \sigma(r_\Omega) \frac{\kappa\kappa_0}{G^{-1}(\lam^{-1}) } \|u\|_{H,w}.
\end{equation}
From the fact that $\Phi_{H,w}(u)=1$, by using  Lemma \ref{equiv} we can write
\begin{equation} \label{dess33}
\|u\|_{H,w}=1 = H^{-1}\left(\int_\Omega wH(|u|)\,dx\right) \leq   H^{-1}\left(H(u(x_0)   \|w\|_{L^1(\Omega)} ) \right),
\end{equation}
and using condition \eqref{cond1} we obtain
\begin{align} \label{dess3}
\begin{split}
H^{-1}\left(H(u(x_0)\|w\|_{L^1(\Omega)} ) \right) &=  \frac{ H^{-1}(\|w\|_{L^1(\Omega)}^{-1})}{H^{-1}(\|w\|_{L^1(\Omega))}^{-1})} H^{-1}\left(H( u(x_0) \|w\|_{L^1(\Omega)} ) \right)\\
&\leq \frac{c}{H^{-1}(\|w\|_{L^1(\Omega)}^{-1})} H^{-1} \left(\|w\|_{L^1(\Omega)}^{-1} \|w\|_{L^1(\Omega)} H(u(x_0)) \right)\\
&=\frac{c u(x_0)}{H^{-1}(\|w\|_{L^1(\Omega)}^{-1})}.
\end{split}
\end{align}
Then,  \eqref{estimacion}, \eqref{dess33} and \eqref{dess3} give that
$$
u(x_0)\leq  \sigma(r_\Omega) \frac{c \kappa \kappa_0 }{G^{-1}(\lam_1^{-1}) } \frac{   u(x_0)}{H^{-1}(\|w\|_{L^1(\Omega)}^{-1})}.
$$
Rearranging terms we can rewrite the last inequality as
$$
G^{-1}(\lam^{-1}) \leq c \kappa \kappa_0 \frac{\sigma(r_\Omega)}{H^{-1}(\|w\|_{L^1(\Omega)}^{-1})}.
$$
and finally, applying $G$ to both sides we get \eqref{p.bigger.n.1}.

A slightly different bound can be obtained by observing that inequality \eqref{des.h.1} gives
\begin{align*}
\|u\|_{H,w} &\leq  \frac{\|u\|_H}{H^{-1}(\|w\|_{L^\infty(\Omega)}^{-1})}
\leq 
\frac{\|u\|_{L^\infty(\Omega)} \|\chi_\Omega\|_H}{{H^{-1}(\|w\|_{L^\infty(\Omega)}^{-1})}}\\
&\leq  \frac{u(x_0)}{{H^{-1}(\|w\|_{L^\infty(\Omega)}^{-1})}} |\Omega| (\tilde H)^{-1}(|\Omega|^{-1}):=\frac{u(x_0)}{{H^{-1}(\|w\|_{L^\infty(\Omega)}^{-1})}} \cdot \tau_H(\Omega)
\end{align*}
where we have used \cite[Example 3.6.9]{CFK} and \cite[Theorem 3.8.5]{CFK} to estimate the norm of a characteristic function, being $(\tilde H)^{-1}$  the inverse of the conjugated function of $H$. This  together with \eqref{estimacion} gives
$$
G^{-1}(\lam^{-1}) \leq  \frac{\kappa\kappa_0\sigma(r_\Omega)}{{H^{-1}(\|w\|_{L^\infty(\Omega)}^{-1})}}  \tau_H(\Omega),
$$
from where it follows \eqref{p.bigger.n.2}.
\end{proof}

\section{One-dimensional inequalities} \label{sec.1d}
In this section we deal with the eigenvalue problem \eqref{eigen} when $\Omega=(a,b)\subset \R$ is a bounded interval. In this case $W^{1,G}(\Omega) \subset L^H(\Omega)$ with compact inclusion for any couple of Young function satisfying \eqref{condp}. 
This allows us to prove a lower bound for $\lam_{1,\mu}$, $\mu\geq 1$,  without using Morrey type estimates.

As in the previous section, we  denote $\lam_1:=\lam_{1,1}$ and $\A:=\A_{1}$, and again, by Corollary \ref{coro}, a lower bound of $\lam_1$ is also a lower bound for $\mu \lam_{1,\mu}$ when $\mu\geq 1$.

\begin{thm} \label{teo1d-v1}

Let $G$ and $H$ be  Young functions satisfying   \eqref{cond}. Let $\lam_1$ be the first eigenvalue of \eqref{eigen} with $\Omega=(a,b)\subset \R$ a bounded interval. Assume that $w\in L^\infty(\Omega)$. Then it holds that
$$
\left[ G\left( 
2c\kappa_0(b-a) \frac{G^{-1}\left(\frac{1}{b-a}\right)}{H^{-1}\left(\|w\|_{L^1(\Omega)}^{-1}\right)}
 \right) \right]^{-1} \leq \lam_1.
$$
\end{thm}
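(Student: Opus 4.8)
The plan is to trap the sup-norm $\|u\|_{L^\infty(\Omega)}$ of a first eigenfunction between an upper bound in terms of $\lam_1$ and a lower bound in terms of $\|w\|_{L^1(\Omega)}$; the one–dimensional setting is used only through the fact that $W^{1,G}_0(a,b)$ embeds into $L^\infty(a,b)$ by the fundamental theorem of calculus, so no Morrey-type estimate is needed. Concretely, I would take $u\in\A$ a (positive) eigenfunction associated with $\lam_1$, given by Proposition \ref{prop.lam1}, so that $\Phi_{H,w}(u)=\int_\Omega wH(u)\,dx=1$ and hence, by Lemma \ref{equiv}, $\|u\|_{H,w}=1$. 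Proposition \ref{prop1} then yields
$$
\|u'\|_G\le \frac{\kappa_0}{G^{-1}(\lam_1^{-1})}.
$$

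Next I would bound $\|u\|_{L^\infty(\Omega)}$ from above. Since $u\in W^{1,G}_0(a,b)$ is absolutely continuous with $u(a)=0$, for every $x\in(a,b)$ one has $|u(x)|\le\int_a^b|u'(t)|\,dt$. Applying Hölder's inequality for Young functions to $|u'|\cdot\chi_\Omega$ and estimating the Luxemburg norm of the characteristic function as in \cite[Example 3.6.9]{CFK} and \cite[Theorem 3.8.5]{CFK},
$$
\int_a^b|u'|\,dt\le 2\|u'\|_G\,\|\chi_\Omega\|_{\tilde G}\le 2(b-a)\,G^{-1}\!\left(\frac{1}{b-a}\right)\|u'\|_G,
$$
so that, combining with the control on $\|u'\|_G$ above,
$$
\|u\|_{L^\infty(\Omega)}\le \frac{2\kappa_0(b-a)\,G^{-1}\!\left(\frac{1}{b-a}\right)}{G^{-1}(\lam_1^{-1})}.
$$

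Then I would bound $\|u\|_{L^\infty(\Omega)}$ from below using the normalization: since $w\ge0$ and $H$ is increasing,
$$
1=\int_\Omega wH(|u|)\,dx\le H\big(\|u\|_{L^\infty(\Omega)}\big)\,\|w\|_{L^1(\Omega)},
$$
whence $H^{-1}\big(\|w\|_{L^1(\Omega)}^{-1}\big)\le\|u\|_{L^\infty(\Omega)}$; keeping track of \eqref{cond1} when peeling off the factor $\|w\|_{L^1(\Omega)}$, exactly as in the proof of Theorem \ref{p.bigger.n}, reproduces this inequality with an extra factor $c$. Comparing the two bounds on $\|u\|_{L^\infty(\Omega)}$ and using that $G$ and $G^{-1}$ are increasing gives
$$
G^{-1}(\lam_1^{-1})\le \frac{2c\kappa_0(b-a)\,G^{-1}\!\left(\frac{1}{b-a}\right)}{H^{-1}\big(\|w\|_{L^1(\Omega)}^{-1}\big)},
$$
and applying $G$ to both sides and taking reciprocals yields the claimed inequality.

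I do not expect a genuine analytic obstacle here: the substance is entirely in Proposition \ref{prop1} together with the elementary embedding $W^{1,G}_0(a,b)\hookrightarrow L^\infty(a,b)$. The only delicate point is the bookkeeping of the several inverse functions and the direction of each estimate — in particular arranging that the factor $(b-a)G^{-1}(\frac1{b-a})$ ends up in the numerator rather than as its reciprocal, which is precisely what the passage through the complementary function $\tilde G$ in the Hölder step delivers.
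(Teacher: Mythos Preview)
Your proposal is correct and follows essentially the same route as the paper: take a normalized positive eigenfunction $u\in\mathcal A$, bound $u(x_0)=\|u\|_{L^\infty}$ from above via $\int_a^b|u'|\le 2\|u'\|_G\|\chi_\Omega\|_{\tilde G}$ together with Proposition \ref{prop1} and the estimate $\|\chi_\Omega\|_{\tilde G}\le (b-a)G^{-1}((b-a)^{-1})$, bound it from below using the normalization $\int wH(u)=1$, and compare. The paper phrases the lower bound step by invoking the chain \eqref{dess33}--\eqref{dess3} from Theorem \ref{p.bigger.n} (which is where the constant $c$ from \eqref{cond1} enters), whereas your direct argument $1\le H(\|u\|_\infty)\|w\|_{L^1}$ actually yields the same estimate without the factor $c$; but as you note, this is just bookkeeping and the two arguments are otherwise identical.
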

\begin{proof}
Let $u\in \mathcal{A}$ be an eigenfunction corresponding to $\lam_1$. By Proposition \ref{prop.lam1} we have that $u>0$ in $\Omega$. Moreover, by \eqref{condp} and \cite[Theorem 3.17.1]{CFK} it follows that $W^{1,G}(\Omega)\subset W^{1,p^-}(\Omega)$, where $p^-$ is given in \eqref{condp}, so  $u$ equals a.e. a positive  absolutely continuous function. Hence, we can take $x_0\in \Omega$  such that $u(x_0)=\|u\|_{L^\infty(\Omega)}$. By using H\"older's inequality for Orlicz spaces we get
$$
u(x_0)\leq \int_a^b |u'(t)|\,dt \leq 2\|u'\|_G \|\chi_{(a,b)} \|_{\tilde G}.
$$
By \cite[Example 3.6.9]{CFK} and \cite[Theorem 3.8.5]{CFK} 
$$
\| \chi_{(a,b)} \|_{\tilde G} \leq  (b-a) G^{-1}\left( \frac{1}{b-a}\right).
$$
Then, the last two expression together with Proposition \ref{prop1} yield
$$
 u(x_0) \leq  2 (b-a) G^{-1}\left( \frac{1}{b-a}\right)  \frac{\kappa_0}{G^{-1}(\lam^{-1}) } \|u\|_{H,w},
$$
and invoking     \eqref{dess33} and \eqref{dess3} we get
$$
 u(x_0) \leq  2 (b-a) G^{-1}\left( \frac{1}{b-a}\right)  \frac{\kappa_0}{G^{-1}(\lam^{-1}) }\frac{c  u(x_0) }{H^{-1}\left(\|w\|_{L^1(\Omega)}^{-1}\right)}
$$
from where
$$
G^{-1}(\lam^{-1})\leq  2c\kappa_0(b-a) \frac{G^{-1}\left(\frac{1}{b-a}\right)}{H^{-1}\left(\|w\|_{L^1(\Omega)}^{-1}\right)}.
$$
Finally, the result follows by applying $G$ to both sides of the inequality.
\end{proof}

When we further assume $H\prec G$, following the ideas of \cite{Pi}, we have the following result which bounds by below to $\lam_{1,\mu}$ for any $\mu>0$ (observe that now $\mu$ is allowed to be less than 1).

\begin{thm} \label{teo1d-v2}
Under the assumptions of Theorem \ref{teo1d-v1}, if we further assume that $H\prec G$ with constant $k>0$, then
$$
\frac{b-a}{C k \|w\|_{L^1(\Omega)} G\left(\frac{k(b-a)}{2}\right)}\leq \lam_{1,\mu}
$$
for any $\mu>0$.
\end{thm}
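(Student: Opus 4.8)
\textbf{Proof plan for Theorem \ref{teo1d-v2}.}

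The plan is to follow the classical one-dimensional argument adapted from \cite{Pi}. Let $u\in\A_\mu$ be the eigenfunction realizing $\lam_{1,\mu}$, which by Proposition \ref{prop.lam1} may be taken strictly positive and (by the one-dimensional embedding recalled at the start of this section) absolutely continuous on $[a,b]$ with $u(a)=u(b)=0$. Pick $x_0\in(a,b)$ with $u(x_0)=\|u\|_{L^\infty}=:M$. The starting identity is
\begin{equation*}
\mu\,\lam_{1,\mu}=\int_a^b G(|u'|)\,dt .
\end{equation*}
First I would split the interval at $x_0$ and estimate $M$ from above on each piece by the fundamental theorem of calculus, $M=u(x_0)=\int_a^{x_0}u'\,dt$ and similarly $M=-\int_{x_0}^b u'\,dt$; combined with Jensen's inequality for the convex function $G$ this gives $G\!\big(M/(x_0-a)\big)\le \frac{1}{x_0-a}\int_a^{x_0}G(|u'|)\,dt$ and the symmetric bound on $[x_0,b]$. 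Adding these and using that $\min\{x_0-a,\,b-x_0\}\le (b-a)/2$ together with monotonicity of $G$ yields a lower bound
\begin{equation*}
\int_a^b G(|u'|)\,dt \;\ge\; (\text{something})\cdot G\!\left(\frac{2M}{b-a}\right),
\end{equation*}
or, more robustly, $G\!\big(2M/(b-a)\big)\le \int_a^b G(|u'|)\,dt = \mu\lam_{1,\mu}$ after absorbing the length factors; the precise bookkeeping of where the constant $C$ and the factor $k$ enter is where the $\Delta'$ condition will be invoked.

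Next I would bound the right-hand side of the eigenvalue identity from above using the normalization $\int_a^b wH(|u|)\,dt=\mu$. Since $H\prec G$ with constant $k$ and $u\le M$ pointwise, and $G$ is increasing,
\begin{equation*}
\mu=\int_a^b wH(|u|)\,dt \le \int_a^b wG(k|u|)\,dt \le G(kM)\,\|w\|_{L^1(\Omega)} .
\end{equation*}
Now apply the $\Delta'$ inequality \eqref{cond} in the form $G(kM)=G\!\big(k(b-a)/2\cdot 2M/(b-a)\big)\le C\,G\!\big(k(b-a)/2\big)\,G\!\big(2M/(b-a)\big)$. Plugging in the lower bound $G\!\big(2M/(b-a)\big)\le \mu\lam_{1,\mu}$ from the first step gives
\begin{equation*}
\mu \le C\,G\!\left(\tfrac{k(b-a)}{2}\right)\,\|w\|_{L^1(\Omega)}\cdot \mu\,\lam_{1,\mu},
\end{equation*}
and the $\mu$'s cancel, which is exactly why there is no restriction on $\mu$; rearranging yields $\lam_{1,\mu}\ge \big(C\,k\,\|w\|_{L^1}\,G(k(b-a)/2)\big)^{-1}$ once the stray factor $k$ is tracked (it comes from comparing $G(2M/(b-a))$ with a clean power of $(b-a)$, or equivalently from a second application of \eqref{cond} to pull out $k$; in the $p$-Laplacian case this is the difference between the ``rough'' and ``accurate'' constants noted in the remark). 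The final $L^\infty$ consequence is immediate from $\|w\|_{L^1(\Omega)}\le (b-a)\|w\|_{L^\infty(\Omega)}$.

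The main obstacle I anticipate is purely bookkeeping rather than conceptual: because $G$ is not homogeneous, the factors of $(b-a)$, $k$, and the constant $C$ from \eqref{cond} do not combine as cleanly as in the power case, and one must be careful to apply \eqref{cond} in the right direction (it only gives an upper bound $G(ab)\le CG(a)G(b)$, never a lower bound), so the Jensen step must be arranged so that the quantity one needs to bound below, $G(2M/(b-a))$, appears directly rather than as a factor inside a product. A secondary subtlety is justifying that $u$ attains its sup at an interior point and is genuinely absolutely continuous — but this is already guaranteed by Proposition \ref{prop.lam1} and the embedding $W^{1,G}(\Omega)\subset W^{1,p^-}(\Omega)$ used in the proof of Theorem \ref{teo1d-v1}, so I would simply cite those. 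No compactness or regularity machinery beyond what is already established is needed.
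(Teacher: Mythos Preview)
Your plan is essentially the paper's proof, and it is correct. Two remarks on execution.

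First, the paper's Jensen step is simpler than your split-interval version: from $M=\int_a^{x_0}u'$ and $M=-\int_{x_0}^b u'$ one gets $2M\le\int_a^b|u'|$ in one stroke, divides by $b-a$, and applies Jensen once on the whole interval to obtain
\[
G\!\left(\frac{2M}{b-a}\right)\le \frac{1}{b-a}\int_a^b G(|u'|)\,dt=\frac{\mu\,\lam_{1,\mu}}{b-a}.
\]
Your two-piece Jensen also works (using that $s\mapsto sG(M/s)$ is nonincreasing under \eqref{condp}), but it is more effort for a slightly worse constant.

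Second, the line you wrote as ``$G(2M/(b-a))\le \mu\lam_{1,\mu}$'' is where your bookkeeping slips: the factor $1/(b-a)$ must stay on the right, and it is exactly this factor that produces the $(b-a)$ in the numerator of the final bound. With that correction your chain
\[
\mu\le G(kM)\|w\|_{L^1}\le C\,G\!\left(\tfrac{k(b-a)}{2}\right)G\!\left(\tfrac{2M}{b-a}\right)\|w\|_{L^1}\le C\,G\!\left(\tfrac{k(b-a)}{2}\right)\|w\|_{L^1}\cdot\frac{\mu\,\lam_{1,\mu}}{b-a}
\]
gives the stated inequality after cancelling $\mu$; this is precisely the paper's argument, only with $\mu$ carried as an intermediate variable rather than the integral $\int wH(u)$ itself.
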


\begin{proof}
Let $\lam_{1,\mu}$ with eigenfunction $u\in \A_\mu$ for any $\mu>0$. As in the proof of Theorem \ref{teo1d-v1}, let $x_0\in [a,b]$ be such that $u(x_0):=\|u\|_{L^\infty(\Omega)}$. We write
$$
2|u(x_0)|=\left| \int_a^{x_0} u'(x)\,dx \right|	 + \left| \int_{x_0}^b u'(x)\,dx \right|	\leq \int_a^b |u'(x)|\,dx,
$$
which yields
$$
\frac{2  u(x_0) }{b-a} \leq \frac{1}{b-a}\int_a^b |u'(x)|\,dx,
$$
and from Jensen's inequality and Remark \ref{rem.autov}
$$
G\left(\frac{2  u(x_0) }{b-a} \right) \leq \frac{1}{b-a}\int_a^b G(|u'|)\,dx = \frac{\lam_{1,\mu}}{b-a} \int_a^b wH(u)\,dx
$$
from where, since $H\prec G$, for some fixed $k>0$
\begin{align*}
G\left(\frac{2  u(x_0) }{b-a} \right) &\leq  \frac{\lam_{1,\mu}}{b-a} \int_a^b wG(ku)\,dx
\leq \frac{\lam_{1,\mu}}{b-a}  G(k u(x_0) ) \|w\|_{L^1(\Omega)}.
\end{align*}
Observe that, using \eqref{cond} we get
\begin{equation*}
G(ku(x_0)) = G\left(ku(x_0) \frac{2}{b-a} \frac{b-a}{2}\right)  \leq C G\left( \frac{2 u(x_0) }{b-a} \right) G\left(\frac{k(b-a)}{2}\right).
\end{equation*}
From the last two inequalities we get
$$
1\leq \frac{\lam_{1,\mu}}{b-a} \|w\|_{L^1(\Omega)} k C   G\left(\frac{k(b-a)}{2}\right)
$$
and the result follows.
\end{proof}

\section{Final remarks  } \label{sec.rem}

Our results are based in the fact that $\lam_{1,1}$ is always a lower than $\lam_{1,\mu}$ when $\mu\geq 1$. Hence it is enough with looking for lower bounds of that  quantity, and it is possible due to the equivalence between norm  and modular when $\mu=1$. A natural question is to determine if our results still true when $0<\mu<1$. Moreover, it would be interesting to obtain similar result without assuming the $\Delta'$ condition on $G$ and $H$.

Finally, in \cite{FBS} a fractional version of the $g-$Laplacian was introduced. Eigenvalues of that operator were recently studied in \cite{BS, S, SaVi}. Then, a natural question is to analyze whether an extension of the results of this manuscript can be formulated for the nonlocal  version of problem  \eqref{eigen.intro}. The case of powers, i.e., $G(t)=H(t)=t^p$ was dealt in \cite{JKS}.

\section*{Acknowledgments}
The author wants to thank to J. Fern\'andez Bonder and H. Vivas for valuable comments on the subject.

\end{document}